\documentclass[a4paper,reqno,11pt]{amsart}

\usepackage{setspace, amssymb, amsmath, amsthm, graphicx, color, comment, booktabs, multirow, hyperref, mathtools, thmtools, thm-restate}
\usepackage[T1]{fontenc}
\usepackage[utf8]{inputenc}
\usepackage[inline]{enumitem}
\usepackage[dvipsnames,svgnames,table]{xcolor}
\usepackage[foot]{amsaddr}
\usepackage[capitalise, noabbrev]{cleveref}

\usepackage[longnamesfirst,numbers,sort&compress]{natbib}

\hypersetup{
    pdftitle={Product structure of graphs excluding a topological minor},
    pdfauthor={Jędrzej Hodor, Hoang La, Piotr Micek, Clément Rambaud},
    colorlinks,
    linkcolor={RoyalBlue},
    citecolor={RubineRed},
    urlcolor={blue!80!black},
    nesting=false,
}

\colorlet{defcolor}{ForestGreen}
\newcommand{\defin}[1]{\relax\ifmmode{\color{defcolor}{#1}}\else{\emph{\textcolor{defcolor}{#1}}}\fi}

\newcommand{\td}{\operatorname{td}}

\newcommand{\tw}{\operatorname{tw}}
\newcommand{\utw}{\operatorname{utw}}

\newcommand{\Oh}{\mathcal{O}}

\newcommand{\calC}{\mathcal{C}}

\newcommand{\bigO}{\mathcal{O}}

\let\leq\leqslant
\let\geq\geqslant

\let\subset\subseteq

\let\epsilon\varepsilon
\let\phi\varphi

\DeclareMathOperator\clos{clos}

\renewcommand{\setminus}{-}

\usepackage{tikz}
\usetikzlibrary{arrows}
\usetikzlibrary{calc}
\tikzset{myspacing/.style = {outer sep = 5pt, inner sep = 5pt}}
\tikzset{implies/.style = {-implies, double distance=3pt, thick}}
\tikzset{box/.style = {rectangle, draw, myspacing, align=center}}

\providecommand{\noopsort}[1]{}

\makeatletter
\def\thm@space@setup{
\thm@preskip=4mm
\thm@postskip=0mm
}
\makeatother

\crefformat{equation}{#2(#1)#3}
\let\eqref\cref
\crefformat{subsection}{Subsection #2#1#3}
\crefformat{subsubsection}{Subsubsection #2#1#3}

\theoremstyle{plain}
\newtheorem{thm}{Theorem}
\newtheorem*{thm*}{Theorem}

\newtheorem{lemma}[thm]{Lemma}
\newtheorem*{lemma*}{Lemma}
\newtheorem{cor}[thm]{Corollary}
\newtheorem*{cor*}{Corollary}
\newtheorem{obs}[thm]{Observation}
\newtheorem{conj}[thm]{Conjecture}
\newtheorem*{conj*}{Conjecture}

\theoremstyle{remark}

\newtheorem*{problem*}{Open problem}

\newtheorem{claim}[thm]{Claim}
\newtheorem*{claim*}{Claim}

\crefname{obs}{Observation}{Observations}
\theoremstyle{definition}

\crefname{lem}{Lemma}{Lemmas}
\crefname{thm}{Theorem}{Theorems}
\crefname{cor}{Corollary}{Corollaries}

\newenvironment{proofclaim}[1][]
    {\let\oldqed\qedsymbol\renewcommand{\qedsymbol}{\ensuremath{\lozenge}}\begin{proof}[Proof of the claim] }{\end{proof}\renewcommand{\qedsymbol}{\oldqed}}

\setenumerate{label=\textup{(\roman*)}, noitemsep, topsep=3pt-\parskip, 
labelindent=.2em, leftmargin=*, widest=iii,}
\setitemize{noitemsep, topsep=-\parskip, labelindent=.2em, leftmargin=*, widest=iii,}

\newdimen\mywidth
\sbox0{m}%
\newdimen\mywidthprim
\sbox0{m}%
\newdimen\mywidthprimprim
\sbox0{m}%
\newdimen\mywidthA
\sbox0{m}%
\newdimen\mywidthAprim
\sbox0{m}%
\newcommand{\leaves}[1]{\mathrm{Leaves}(#1)}

\title{Product structure of graphs excluding a topological minor}

\begin{document}

\author[Hodor]{Jędrzej Hodor}
\address[J.~Hodor]{Theoretical Computer Science Department, 
Faculty of Mathematics and Computer Science and Doctoral School of Exact and Natural Sciences, Jagiellonian University, Krak\'ow, Poland}
\email{jedrzej.hodor@gmail.com}

\author[La]{Hoang La}
\address[H.~La]{LISN, Universit\'e Paris-Saclay, CNRS, Gif-sur-Yvette, France}
\email{hoang.la.research@gmail.com}

\author[Micek]{Piotr Micek}
\address[P.~Micek]{Theoretical Computer Science Department, 
Faculty of Mathematics and Computer Science, Jagiellonian University, Krak\'ow, Poland}
\email{piotr.micek@uj.edu.pl}

\author[Rambaud]{Clément Rambaud}
\address[C.~Rambaud]{Universit\'e C\^ote d'Azur, CNRS, Inria, I3S, Sophia-Antipolis, France}
\email{clement.rambaud@normalesup.org}

\thanks{This research was funded by the National Science Center, Poland under grant UMO-2023/05/Y/ST6/00079 within the WEAVE-UNISONO program. 
Additionally, J.\ Hodor was supported by a Polish Ministry of Education and Science grant (Perły Nauki; PN/01/0265/2022); H.\ La benefited from the support of the FMJH Program PGMO}

\begin{abstract}
    We prove that, for all positive integers $h$ and $t$ and every graph $X$ with $\td(X) \leq h$, 
    there exists a positive integer $c(X,t)$ such that every graph $G$ with $\tw(G) < t$ 
    that excludes $X$ as a topological minor is isomorphic to a subgraph of $H \boxtimes K_{c(X,t)}$ for some graph $H$ with $\tw(H) < 2^{h+1}-1$.
    This extends a result by Ding and Oporowski (Journal of Graph Theory; 1995),
    which states that 
    for all positive integers $\Delta$ and $t$, 
    there exists a positive integer $f(\Delta,t)$ such that 
    every graph $G$ with $\tw(G)<t$ and $\Delta(G)\leq\Delta$ is isomorphic to a subgraph of $T \boxtimes K_{f(\Delta,t)}$ for some tree $T$. 
\end{abstract}

\maketitle

\section{Introduction}\label{sec:intro}

In 1995, Ding and Oporowski~\cite{Ding_1995} proved that 
for all positive integers $\Delta$ and $t$, 
there exists a positive integer $f(\Delta,t)$ such that for every graph $G$ with treewidth less than $t$ and $\Delta(G) \leq \Delta$, 
there is a tree $T$ such that $G$ is isomorphic to a subgraph of $T\boxtimes K_{f(\Delta,t)}$. 
Here, for a graph $H$ and a nonnegative integer $c$,
we denote by $H \boxtimes K_c$ the graph obtained from $H$ by replacing every
vertex by a copy of $K_c$, and every edge by a copy of $K_{c,c}$.\footnotemark

The condition $\Delta(G) \leq \Delta$ is equivalent to excluding the star with $\Delta+1$ leaves as a topological minor. 
Our main result extends the theorem of Ding and Oporowski~\cite{Ding_1995} from stars to arbitrary fixed excluded topological minors. 
Although the first factor of the product is no longer a tree, its treewidth is bounded in terms of the treedepth of $X$. 
For a graph $G$, we denote its treewidth and treedepth by
\defin{$\tw(G)$} and \defin{$\td(G)$}, respectively.
These and all other necessary definitions are given in \cref{sec:preliminaries}.

\begin{thm}\label{thm:main}
    Let $h$ and $t$ be positive integers,
    and let $X$ be a graph with $\td(X) \leq h$.
    There exists a positive integer $c(X,t)$ such that,
    for every graph $G$ with $\tw(G)<t$,
    if $X$ is not a topological minor of $G$,
    then there exists a graph $H$ such that
    \begin{enumerate}
        \item $\tw(H) < 2^{h+1}-1$ and
        \item $G \subseteq H \boxtimes K_{c(X,t)}$.
    \end{enumerate}
\end{thm}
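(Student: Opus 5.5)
We will prove \cref{thm:main} by induction on $h$, working with tree-decompositions directly. Rather than building $H$ explicitly, we construct a partition $\mathcal{P}$ of $V(G)$ into parts of size at most $c(X,t)$ whose quotient $G/\mathcal{P}$ has treewidth less than $2^{h+1}-1$. This suffices, since $G \subseteq (G/\mathcal{P}) \boxtimes K_{\max|P|}$.

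To make the induction work, the statement must be strengthened with a rooted or "interface" version. Fix a tree-decomposition $(T,(W_x))$ of $G$ of width less than $t$. We will show that for every set $S \subseteq V(G)$ of bounded size (the adhesion to the parent), there is a partition with bounded parts and a tree-decomposition of the quotient of width less than $2^{h+1}-1$ in which all parts meeting $S$ lie in a common bag. The bound $2^{h+1}-1 = 2(2^h-1)+1$ suggests the recursion: the width doubles and one extra part is added, namely the part containing the root-level vertices.

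\textbf{Base case.} For $h=1$, $X$ is edgeless, so excluding $X$ bounds $|V(G)|$ and the trivial one-part partition works.

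\textbf{Inductive step.} Let $\td(X)\le h$. Then $X$ is a subgraph of the closure of a rooted forest of height $h$; we may assume $X$ is such a closure of a tree with root $r$, with subtrees $X_1,\dots,X_k$ of treedepth at most $h-1$. We use the standard "many disjoint copies" argument: a vertex $v$ of $G$ is \emph{rich} if it is the branch vertex of many internally disjoint models with roots leading into topological models of the $X_i$. A Menger-type or bounded-treewidth Erd\H{o}s--P\'osa type argument (using $\tw(G)<t$) shows the following. Either $G$ contains $X$ topologically, or there is a set $Z$ of bounded size per "region" such that after deleting $Z$, each relevant piece excludes some $X'$ with $\td(X')\le h-1$ as a topological minor, with components attached to bounded-size separators of the tree-decomposition.

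Concretely, we process $T$ from the root. At each step we take a bag $B$ of bounded size and consider the components $C$ of $G-B$ hanging below it. The vertices of high "degree towards disjoint $X_i$-structures" are bounded in number within each component by the exclusion of $X$. These vertices are grouped into one part together with $B$; this is the $+1$ in the width bound. Each component, after removing them, excludes a graph of treedepth $h-1$ and is handled by induction. Two layers of inductively obtained decompositions (the current piece and its interface with the parent) are then glued, giving width $2(2^h-1)+1$.

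\textbf{Main obstacle.} The crux is the step that bounds the number of "high-degree" vertices that must be merged into a single part, and that ensures the remaining graph genuinely excludes a smaller-treedepth graph. This requires a linkage/Erd\H{o}s--P\'osa argument for topological models in bounded treewidth: either $k$ disjoint models of $X_i$ (rooted appropriately), or a bounded hitting set. The hitting sets must also stay consistent with the tree-decomposition so that parts remain bounded. I would first try the standard bounded-treewidth Erd\H{o}s--P\'osa property for connected topological minors (hitting set $O(k\,t)$ via tree-decomposition bags), applied separately inside each branch of the decomposition, and charge the hit vertices to the bag where they are found.
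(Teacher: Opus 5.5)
\textbf{There is a genuine gap.} Your induction rests on a step that is false: deleting a (locally) bounded set of ``rich'' vertices does not make the components below the current bag exclude a graph of treedepth at most $h-1$.

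Take $X=K_{1,3}$ (so $\td(X)=2$) and let $G$ be a long path. No vertex is rich, so nothing is deleted, and the components hanging below any bag are long paths. Graphs of treedepth $1$ are edgeless, and a long path contains every small edgeless graph as a topological minor, so it excludes no fixed graph of treedepth $1$.

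In general, removing boundedly many vertices from a component leaves a graph that still excludes only $X$ itself. So there is nothing to which a hypothesis of the form ``$G$ excludes some $X'$ with $\td(X')\le h-1$'' could be applied, and the Erd\H{o}s--P\'osa step you sketch cannot supply one. Only the part of the structure that \emph{attaches to the interface} can drop a level.

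\textbf{The missing idea is to change what the induction is about.} The paper proves a hitting-set statement, \Cref{thm:main_induction}. Its input is a rooted tree $X$ of height $h$, an interface $R$, and a family $\mathcal{F}$ of connected subgraphs of $G-R$ with no $(X,\mathcal{F})$-structure (a subdivision of $\clos(X)$ whose leaf branch sets contain members of $\mathcal{F}$). Its output is a set $S$ hitting $\mathcal{F}$, a partition $\mathcal{P}\ni R$ of $S$ into small parts, and a decomposition of $(G,\mathcal{P})$ whose bags have at most $2^h-2$ parts, and at most $2^{h-1}$ parts when the bag contains $R$.

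The level drops only for the family $\mathcal{F}'_r$ of connected subgraphs that touch some $r\in R$ and contain a member of $\mathcal{F}$:
\begin{enumerate}
\item A Helly-type deletion $Z$ (\Cref{lemma:helly_with_LCA}) is made first.
\item The level-$(h-1)$ hypothesis, applied to $\bigcup_r\mathcal{F}'_r$, then yields a hitting set $S_0$.
\item Each component $C$ of $G-(R\cup Z\cup S_0)$ that contains a member of $\mathcal{F}$ is non-adjacent to $R$. It is recursed on at the \emph{same} level $h$, with new interface $N_G(V(C))$, by induction on $|V(G-R)|$.
\end{enumerate}

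Because $R$ can have up to $(2^{h-1}-1)\cdot 2t$ vertices, each a potential root, you also need the Ramsey-type \Cref{lemma:colorful_lemma}. It turns a structure for $\bigcup_r\mathcal{F}'_r$ into one for a single $r$, so that many disjoint such structures assemble into an $(X,\mathcal{F})$-structure rooted at one $r$.

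The asymmetric bag bound is what makes the gluing work: $(2^{h-1}-1)+(2^{h-1}-2)+1=2^h-2$. The theorem then follows by applying this at height $\td(X)+1$, with $\mathcal{F}$ the single-vertex subgraphs, and adding one extra part $Z$. Your ``width doubles plus one'' heuristic, and your interface condition that all parts meeting $S$ share a bag without any smaller bound on that bag, have no such mechanism behind them.
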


The significance of \Cref{thm:main} is best expressed in terms of the so-called \emph{underlying treewidth}. 
The \defin{underlying treewidth} of a class of graphs $\mathcal{C}$,
denoted by \defin{$\utw(\mathcal{C})$},
is the smallest integer $k$, if it exists, such that for every nonnegative integer $t$, there exists a positive integer $c(t)$ such that every graph $G \in \mathcal{C}$ with $\tw(G)<t$ is a subgraph of $H \boxtimes K_{c(t)}$ for some graph $H$ with $\tw(H)\leq k$.
If no such integer exists, then we say that $\mathcal{C}$ has unbounded underlying treewidth and write $\utw(\mathcal{C}) = +\infty$.

\footnotetext{More generally, for two graphs $A$ and $B$,
the \defin{strong product} of $A$ and $B$, denoted by \defin{$A \boxtimes B$}, is the graph with vertex set $V(A) \times V(B)$, and two distinct vertices $(a,b)$ and $(a',b')$ are adjacent when $aa' \in E(A)$ or $a=a'$, and $bb' \in E(B)$ or $b=b'$.}

Campbell, Clinch, Distel, Gollin, Hendrey, Hickingbotham, Huynh, Illingworth, Tamitegama, Tan, and Wood~\cite[Theorem~28]{CCDGHHHITTW22} proved that 
a monotone\footnote{A class of graphs $\mathcal{C}$ is \defin{monotone} if it is subgraph-closed: for every graph $G \in \mathcal{C}$, for every subgraph $H$ of $G$, $H \in \mathcal{C}$.} class of graphs $\mathcal{C}$ has bounded underlying treewidth if and only if all graphs in $\calC$ exclude some fixed graph as a topological minor. 
In particular, they showed that, for every integer $k$ with $k \geq 2$, the underlying treewidth of $K_k$-topological-minor-free graphs is $k-2$ if $k \in \{2,3,4\}$ and $k$ if $k \geq 5$. Therefore, for every fixed graph $X$, the class of graphs excluding $X$ as a topological minor has underlying treewidth at most $|V(X)|$. 
A stronger result is known if we exclude a graph as a minor rather than a topological minor. 
Dujmović, Hickingbotham, Hodor, Joret, La, Micek, Morin, Rambaud, and Wood~\cite[Theorem~2]{DHHJLMMRW24} 
proved that, if $X$ is excluded as a minor, then the treewidth of the
underlying factor can be bounded solely in terms of $\td(X)$. 
Thus, topological-minor
exclusion gave a bound depending on $|V(X)|$, while minor exclusion
gave a bound depending only on $\td(X)$.
\Cref{thm:main} combines these two features by excluding $X$ as a topological minor, while bounding the treewidth of the underlying factor solely in terms of $\td(X)$.

The lower-bound construction of~\cite[Lemmas~12 and~27]{CCDGHHHITTW22} also gives a converse estimate.
For every monotone class of graphs $\mathcal{C}$ and every positive integer $h$,
if every graph of treedepth less than $h$ is a topological minor of some graph in $\mathcal{C}$,
then $\utw(\mathcal{C}) \geq h-2$.
Combining this lower bound with \Cref{thm:main} yields the following.
\begin{cor}
    Let $\mathcal{C}$ be a monotone class of graphs with bounded underlying treewidth.
    Then $\mathcal{C}$ excludes some graph as a topological minor.
    Moreover, if $h$ is the least integer such that $\mathcal{C}$ excludes a graph of treedepth $h$ as a topological minor,
    then
    \[
        h-2 \leq \utw(\mathcal{C}) < 2^{h+1}-1.
    \]
\end{cor}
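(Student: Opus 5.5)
The corollary has two parts, and both should follow from results already quoted in the introduction, so the plan is simply to assemble them.

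\emph{First assertion.} Let $\mathcal{C}$ be monotone with bounded underlying treewidth. By the characterization of Campbell et al.~\cite[Theorem~28]{CCDGHHHITTW22}, a monotone class has bounded underlying treewidth if and only if all its graphs exclude some fixed graph as a topological minor. Hence $\mathcal{C}$ excludes some graph $X_0$ as a topological minor. Since every finite graph has finite treedepth, the least integer $h$ such that $\mathcal{C}$ excludes a graph of treedepth $h$ as a topological minor is well defined, with $h \leq \td(X_0)$.

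\emph{Upper bound.} Fix a graph $X$ with $\td(X) = h$ that is not a topological minor of any graph in $\mathcal{C}$. Apply \Cref{thm:main} to $X$ and $h$. For every $t$, every $G \in \mathcal{C}$ with $\tw(G) < t$ excludes $X$ as a topological minor, so $G \subseteq H \boxtimes K_{c(X,t)}$ for some $H$ with $\tw(H) < 2^{h+1}-1$. In the definition of underlying treewidth we may take $k = 2^{h+1}-2$ and $c(t) = c(X,t)$. This gives $\utw(\mathcal{C}) \leq 2^{h+1}-2 < 2^{h+1}-1$.

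\emph{Lower bound.} This uses the quoted consequence of~\cite[Lemmas~12 and~27]{CCDGHHHITTW22}. By minimality of $h$, $\mathcal{C}$ does not exclude any graph of treedepth less than $h$ as a topological minor. That is, every graph of treedepth less than $h$ is a topological minor of some graph in $\mathcal{C}$. The quoted statement then gives $\utw(\mathcal{C}) \geq h-2$.

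\emph{Edge case.} If $h = 1$, then graphs of treedepth less than $h$ are only the null graph, so the hypothesis is vacuous or trivially satisfied. In any case $h - 2 = -1 \leq \utw(\mathcal{C})$ holds automatically. No step is a real obstacle; the only care needed is the reading of ``least $h$'' as giving the topological-minor universality hypothesis for all treedepths below $h$.
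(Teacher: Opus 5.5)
Your proposal is correct and matches the paper's derivation. You get the first assertion from \cite[Theorem~28]{CCDGHHHITTW22}, the upper bound from \Cref{thm:main} applied to an excluded $X$ with $\td(X)=h$, and the lower bound from the quoted consequence of \cite[Lemmas~12 and~27]{CCDGHHHITTW22}, with the minimality of $h$ supplying its hypothesis, exactly as the paper does.
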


Thus, the underlying treewidth of a monotone class of graphs $\mathcal{C}$ is tied to
the minimum treedepth of a graph excluded as a topological minor in $\mathcal{C}$.

In the special case of $X=K_{a,b}$, 
we refine the proof of~\cref{thm:main} to show that the underlying treewidth is at most $a+4$, improving on the bound bound from \Cref{thm:main}, which is exponential in $a$. 
\begin{thm}\label{thm:main_Krs}
    Let $a$, $b$, and $t$ be positive integers with $a \leq b$.
    There exists a positive integer $c(a,b,t)$ such that,
    for every graph $G$ with $\tw(G)<t$,
    if $K_{a,b}$ is not a topological minor of $G$,
    then there exists a graph $H$ such that
    \begin{enumerate}
        \item $\tw(H)<a+5$ and
        \item $G \subseteq H \boxtimes K_{c(a,b,t)}$.
    \end{enumerate}
\end{thm}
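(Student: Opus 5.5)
We follow the overall strategy of \cref{thm:main}, specialised to $X=K_{a,b}$, and keep track of how much treewidth the underlying factor really needs. The main tool is the standard partition framework. It suffices to find a partition $\mathcal{P}$ of $V(G)$ into parts of size bounded by a function of $(a,b,t)$, such that the quotient $G/\mathcal{P}$ has treewidth less than $a+5$. Indeed, $G \subseteq (G/\mathcal{P}) \boxtimes K_c$ whenever every part has size at most $c$. We build $\mathcal{P}$ by induction along a tree-decomposition of $G$ of width less than $t$. We may assume the decomposition is rooted and has adhesions of size at most $t$.

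\textbf{Step 1: classify vertices by degree.} Fix a large threshold $N=N(a,b,t)$. Call a vertex \emph{big} if its degree is at least $N$.

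\textbf{Step 2: few big vertices per adhesion.} We claim that, for every bag (or separator) $S$ of the decomposition, fewer than $a$ vertices of $S$ have "many" internally disjoint paths into the subtree below $S$. The argument is as follows. Suppose $a$ such vertices exist. Since the treewidth is bounded, the high-degree neighbourhoods below $S$ contain, by a Ramsey or pigeonhole argument over separators, $b$ common "sinks" reachable by disjoint paths. This yields a topological $K_{a,b}$ with branch vertices the $a$ big vertices and $b$ vertices below. The Ding–Oporowski style argument (bounded treewidth together with many disjoint fans) supplies exactly these disjoint paths. So every relevant separator carries at most $a-1$ "active" big vertices. Together with the $t$-bounded parts of small-degree vertices, this is what pushes the width of the quotient to $a+\mathcal{O}(1)$ rather than $2^{a+1}$.

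\textbf{Step 3: recursive construction.} Process the decomposition top-down. At each step we have a current interface consisting of at most $a-1$ big vertices, kept as singleton parts, together with a bounded number of parts collecting the remaining boundary vertices. Among the low-degree vertices we apply the Ding–Oporowski theorem in a relative form (bounded degree relative to the big vertices). This groups them into bounded-size parts arranged along a tree, where each part is adjacent in the quotient to the tree-neighbouring parts and to the at most $a-1$ currently active big vertices. The quotient decomposition then has bags consisting of $2$ tree-adjacent low-degree parts, the $\le a-1$ active big singletons, and a constant number (about $4$) of extra parts needed for the handoff when a big vertex becomes active or inactive along a branch. This gives bag size at most $a+5$, that is, width less than $a+5$. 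The constants are tuned by putting each newly activated big vertex into its own part.

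\textbf{Main obstacle.} The hard step is Step 2. We must show that when $a$ big vertices each have unbounded "reach" into the same branch, the bounded treewidth forces $b$ common targets linked by internally disjoint paths, with the paths of different big vertices disjoint from one another. My first attempt would be a Menger-type argument in the subgraph below the separator. Bounded treewidth yields, for each big vertex, a large set of its neighbours that pairwise lie in distinct components after deleting a bounded set. A nested pigeonhole over the $a$ vertices, repeated $a$ times with $N$ growing as a tower in $a$, $b$, $t$, then extracts $b$ components each touched by all $a$ vertices. Each such component routes to one branch vertex of the $b$-side. A secondary difficulty is the bookkeeping in Step 3 that keeps the additive constant at $5$.
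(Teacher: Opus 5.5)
There is a genuine gap, and it is in Step~2: the statement is false for the decompositions you allow, and the implication you use to prove it is false outright.

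Take a path $p_1p_2\cdots p_{2N}$. Add a vertex $v_1$ adjacent to $p_1,\dots,p_N$ and a vertex $v_2$ adjacent to $p_{N+1},\dots,p_{2N}$. Adding the edge $v_1v_2$ gives a Hamiltonian cycle $v_1p_1\cdots p_{2N}v_2v_1$ whose chords do not cross. So the graph is outerplanar, and hence has no topological $K_{2,3}$.

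The decomposition with bags $\{v_1,v_2,p_i,p_{i+1}\}$ has width $3$. Its separators $\{v_1,v_2,p_{i+1}\}$ (for small $i$) contain two vertices with arbitrarily large fans into the same connected side. So Step~2 fails with $a=2$, $b=3$, $t=4$.

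More importantly, the implication ``$a$ big vertices with large fans into a common side $\Rightarrow$ topological $K_{a,b}$'' fails no matter which decomposition you choose. Any two connected subgraphs of the path adjacent to both $v_1$ and $v_2$ contain $p_Np_{N+1}$, so they intersect. No pigeonhole can therefore extract $b\geq 2$ disjoint common targets.

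The auxiliary fact you invoke also fails already for a single fan. You claim that many neighbours of a big vertex lie in pairwise distinct components after deleting a bounded set. But deleting $s$ vertices leaves the neighbours of $v_1$ in at most $s+1$ components.

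Step~3 rests on Step~2, so it fails too. Independently, applying Ding--Oporowski to the low-degree vertices gives no control over where the possibly unboundedly many big vertices attach in the resulting tree-partition. So nothing bounds how many of them must be carried through a bag.

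\textbf{The missing idea.} What $K_{a,b}$-freeness actually forbids is many disjoint connected pieces attached to the same $a$ boundary vertices and arranged as leaves of a large star structure. This yields only a hitting set, and that hitting set has to be constructed rather than read off a given decomposition. The paper's argument runs as follows.
\begin{enumerate}
\item It carries boundary parts $R_1,\dots,R_{a+2}$ of size at most $4t$; these are sets, not single big vertices, and degree plays no role. It considers the families $\mathcal{F}_\lambda$ of connected subgraphs of $G-R$ whose neighbourhood contains $\lambda$, where $\lambda$ picks one vertex from each of $a$ distinct $R_i$.
\item There are no $k(b-1)+1$ disjoint $(Y,\bigcup_\lambda\mathcal{F}_\lambda)$-structures. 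Otherwise, pigeonhole on $\lambda$, then route from each star centre through $a$ distinct leaves to the vertices of $\lambda$.
\item \Cref{lemma:helly_with_LCA} then gives a bounded set $Z$.
\item \Cref{thm:main_induction} with $h=2$ is the correct ``relative Ding--Oporowski'' step. It gives a hitting set $S$ for these pieces, together with a decomposition whose bags have at most $2$ parts and whose interfaces have size at most $4t$.
\item For each component $C$ of $G-(R\cup Z\cup S)$, $N_G(V(C))$ meets at most $a-1$ of the $R_i$, since otherwise $C\in\mathcal{F}_\lambda$ for some $\lambda$.
\item One recurses on $C$ with those parts plus at most three new ones, namely $P\cap N_G(V(C))$ for $P\in W_{0,x(C)}\cup\{Z\}$. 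This gives bags of size $2+(a+2)+1=a+5$.
\end{enumerate}
So the quantity bounded by $a-1$ is the number of old boundary parts seen by a component after removing a structured hitting set. It is not the number of high-degree vertices in a separator.
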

Combined with the lower-bound construction of~\cite{CCDGHHHITTW22}, this determines the underlying treewidth up to an additive constant. 
Indeed, if $a$ and $b$ are positive integers with $a\leq b$ and $\mathcal{C}$ denotes the class of all graphs excluding $K_{a,b}$ as a topological minor, then
\[
a-1 \leq \utw(\mathcal{C}) \leq a+4.
\]
Before our work, the only known upper bound on $\utw(\mathcal{C})$ for $a\geq 4$ was a function that grew with $a$ and $b$, see~\cite[Theorem~24]{CCDGHHHITTW22} and the discussion after~\cite[Theorem~25]{CCDGHHHITTW22}.
See also Jacob, Lochet, and Paul~\cite{Jacob2025} for related structural results on graphs excluding simple topological minors, including $K_{2,b}$. 
We highlight that
\Cref{thm:main_Krs} is very specific to the topological minor relation.
In particular,
Liu and Wood~\cite[Theorem~12]{LiuWood2024} gave a construction showing that
$K_{2,b}$-subgraph-free graphs of treewidth (and even treedepth) at most $2k$
have unbounded underlying treewidth.
As an aside, the same authors showed in an earlier paper~\cite[Theorem~4]{LiuWood2019} 
that $K_{a,b}$-subgraph-free graphs of bounded treewidth
have clustered chromatic number at most $a+1$: 
for every $K_{a,b}$-subgraph-free graph $G$ of treewidth less than $t$,
there exists $H$ with $\chi(H) \leq a+1$ such that $G \subseteq H \boxtimes K_{f(a,b,t)}$, for some function $f$ (see also \cite{Liu2024}).

Our results are motivated by the emerging theory of \emph{product structure} of graphs.
A central result in this area, proved in 2019 by
Dujmovi\'{c}, Joret, Micek, Morin, Ueckerdt, and Wood~\cite[Theorem~36(a)]{DJMMUW20}, states that every planar graph $G$ is isomorphic to a subgraph of $H \boxtimes P$ for some path $P$ and some graph $H$ with $\tw(H)\leq 8$.
This theorem and its variants have had a significant impact on structural graph theory.
In particular, they have been used to show that
planar graphs have bounded queue-number~\cite{DJMMUW20} and bounded non-repetitive chromatic number~\cite{Dujmovic2020},
that the $p$-centered chromatic number of planar graphs is $\bigO(p^3\log p)$~\cite[Theorem~1]{Debski2021}, and that
$n$-vertex planar graphs admit an adjacency labeling scheme with labels of length $\log_2 n +o(\log_2 n)$~\cite[Theorem~1]{DEGJMM21}.
For an introduction to the planar product structure theorem and its applications, see~\cite{Dvorak2021}.
Several other product structure theorems have since been established.
In particular, Illingworth, Scott, and Wood~\cite[Theorem~4]{ISW22} gave an elegant product structure theorem for $K_t$-minor-free graphs that, in particular, bounds their underlying treewidth, while Dujmovi\'c, Hickingbotham, Joret, Micek, Morin, and Wood~\cite[Theorem~1]{Dujmovi2023} established a product structure theorem based on pathwidth rather than treewidth.
Both results were important sources of inspiration for this work.

Our results also have direct applications to $p$-centered colorings. 
For a graph $G$ and a positive integer $p$, we denote by \defin{$\chi_p(G)$} the $p$-centered chromatic number of $G$. See the definition in~\cref{sec:conclu}. 
Pilipczuk and Siebertz~\cite[Lemma~15]{PS21} proved a general bound $\chi_p(G) = \mathcal{O}(p^{\tw(G)})$. 
Dębski, Felsner, Micek, and Schr{\" o}der~\cite[Theorem 4]{Debski2021} proved that if  
$G$ excludes $X$ as a topological minor, then 
$\chi_p(G)\in\mathcal{O}(p^{f(X)})$ for some function $f$.
Fix positive integers $a$ and $b$ with $a\leq b$, and let $\mathcal{C}$ denote a class of graphs excluding $K_{a,b}$ as a topological minor. 
For graphs in $\mathcal{C}$ of bounded treewidth, our result gives an
exponent depending only on $a$. 
Indeed, let $c(a,b,t)$ be as in~\cref{thm:main_Krs}. 
Fix $G\in \mathcal{C}$ with $\tw(G)<t$, and let $H$ be the graph given by~\cref{thm:main_Krs}. Then
\[
\chi_p(G) \leq \chi_p(H\boxtimes K_{c(a,b,t)}) \leq c(a,b,t) \cdot \chi_p(H) = \Oh_{a,b,t}(p^{a+4}).
\]
We suspect that such a bound on $\chi_p(G)$ holds for all $G\in\mathcal{C}$ without the extra assumption on the treewidth of $G$. 
See \Cref{conj:cen_col}.

Our proof builds on the notion of ``rich models'', introduced and developed in~\cite{wcol_paper,hodor2026centeredcoloringsweakcoloring}.
Since these works concern minors rather than topological minors, the techniques must be adapted to the present setting.

The proof of \Cref{thm:main} proceeds by induction on the treedepth of the
excluded topological minor.
Let $\mathcal{F}$ be a family of connected subgraphs of a graph $G$.
A standard packing-covering lemma states that, for every nonnegative integer $d$, if $\mathcal{F}$ contains no $d+1$ pairwise vertex-disjoint members, 
then there is a set $Z\subseteq V(G)$ of size at most $(\tw(G)+1)\cdot d$ that intersects every member of $\mathcal{F}$.  
Moreover, paying a factor of two in the size of $Z$, we can get an additional property that every component $C$ of $G-Z$ has a controlled number of neighbors in $G$, 
i.e.\ $|N_G(V(C))|\leq 2(\tw(G)+1)$. 
See~\cref{lemma:helly_with_LCA}.
We prove a structured analogue of this lemma.
Instead of forbidding $d+1$ pairwise disjoint members of $\mathcal{F}$, we forbid a subdivision of a graph $X$ together with suitably connected members of $\mathcal{F}$.
We encode such configurations as $(T,\mathcal{F})$-structures for a rooted tree $T$ witnessing the treedepth of $X$. 
See \Cref{sec:preliminaries} for precise definitions.
Our main technical statement establishes the following:
if there is no such $(T, \mathcal{F})$-structure in $G$,
then there is a set $S \subseteq V(G)$ that intersects every member of $\mathcal{F}$,
and such that, while $S$ might be large, it has a suitable product structure.
See \Cref{thm:main_induction}.
When $\mathcal{F}$ consists of all one-vertex subgraphs of $G$, 
every such hitting set is equal to $V(G)$,
and hence the whole graph $G$ has the desired product structure.
The proof of this statement will be by induction on the
treedepth of the excluded topological minor,
and the possibility to choose the family $\mathcal{F}$
on which we apply the induction hypothesis is at the core of our argument.

In \Cref{sec:preliminaries}, we introduce the necessary definitions and establish several basic lemmas.
In \Cref{sec:colorful}, we prove a technical result, namely \Cref{lemma:colorful_lemma}, which is then used in \Cref{sec:main_proof} to prove \Cref{thm:main}.
In \Cref{sec:Krs}, we prove \Cref{thm:main_Krs}.
Finally, we conclude in \Cref{sec:conclu} with an open problem.

\section{Preliminaries}\label{sec:preliminaries}

For a positive integer $k$, we write $\defin{[k]}=\{1,\ldots,k\}$ and $\defin{[0]} = \emptyset$.
All graphs considered in this paper are finite, simple, and undirected.
The \defin{null graph} is the graph with no vertices.  
Connected graphs are nonnull.
A tree is defined as a connected forest, thus, trees and subtrees are also assumed to be nonnull.
For all nonnegative integers $a$, $b$, we denote by $K_a$ a complete graph on $a$ vertices, and 
by $K_{a,b}$ a complete bipartite graph with parts of size $a$ and $b$.

Let $G$ be a graph.
The \defin{interior} of a path $P$ in $G$ with endpoints $x$ and $y$
is the set of vertices $V(P) \setminus \{x,y\}$.
For $X,Y \subset V(G)$, a path \defin{between} $X$ and $Y$ is a path in $G$ that is either 
a one-vertex path whose unique vertex is in $X \cap Y$, or 
a path with one endpoint in $X$ and the other endpoint in $Y$ such that 
the interior of $P$ is disjoint from $X \cup Y$. 

The \defin{neighborhood} of a vertex $u$ in a graph $G$, denoted by \defin{$N_G(u)$}, is the set $\{v \in V(G) \mid uv \in E(G)\}$.
For every set $X$ of vertices of a graph $G$, let $\defin{N_G(X)}=\bigcup_{u \in X} N_G(u) \setminus X$.

A \defin{rooted tree} is a tree $T$ with a distinguished vertex $r \in V(T)$ called the \defin{root} of $T$.
We denote by \defin{$\leaves{T}$} the set of all the \defin{leaves} of $T$,
i.e., $r$ when $V(T) = \{r\}$ or the vertices of $T$ distinct from $r$ and having degree $1$ in $T$.
The \defin{height} of $x\in V(T)$ is the number of vertices in the path between $x$ and $r$.
The \defin{vertex-height} of $T$ is the largest height of a vertex in $V(T)$.
A \defin{rooted forest} is a graph where each component is a rooted tree.

A \defin{tree decomposition}
of $G$ is a pair $\mathcal{W} = \big(T,(W_x \mid x \in V(T))\big)$,
where $T$ is a tree and $W_x \subseteq V(G)$ for every $x \in V(T)$, satisfying the following conditions:
\begin{enumerate}
    \item for every $u \in V(G)$, the set $\{x \in V(T) \mid u \in W_x\}$ induces a subtree of $T$ and
    \item for every edge $uv \in E(G)$, there exists $x \in V(T)$ such that $u,v \in W_x$.
\end{enumerate}
The sets $W_x$ are called the \defin{bags} of $\mathcal{W}$.
The \defin{width} of $\mathcal{W}$ is $\max_{x \in V(T)} |W_x|-1$,
and the \defin{treewidth} of $G$,
denoted by $\tw(G)$,
is the minimum width of a tree decomposition of $G$.

Let $G$ be a graph, let $S \subseteq V(G)$, and let $\mathcal{P}$ be a partition of $S$.
A \defin{tree decomposition} of $(G,\mathcal{P})$
is a pair $\big(T,(W_x \mid x \in V(T))\big)$ where $T$ is a tree and $W_x \subseteq \mathcal{P}$ for every $x \in V(T)$ such that
\begin{enumerate}
    \item for every $u \in S$, $\{x \in V(T) \mid u \in \bigcup W_x\}$ induces a subtree of $T$,
    \item for every edge $uv$ of $G[S]$, there exists $x \in V(T)$ such that $u,v \in \bigcup W_x$, and
    \item for every connected component $C$ of $G-S$, there exists $x \in V(T)$ such that $N_G(V(C)) \subseteq \bigcup W_x$.
\end{enumerate}
The \defin{width}\index{width} of this tree decomposition is then $\max_{x \in V(T)} |W_x| -1$,
and the \defin{treewidth} of $(G,\mathcal{P})$,
denoted by $\tw(G,\mathcal{P})$, is the minimum width of a
tree decomposition of $(G,\mathcal{P})$.

\begin{obs}[Observation~35~\cite{DJMMUW20}]\label{obs:partition_to_product_structure}
    Let $c$ and $k$ be positive integers.
    For every graph $G$, the following are equivalent:
    \begin{enumerate}[label={\normalfont(\arabic*)}]
        \item there exists a graph $H$ with $\tw(H) < k$ such that $G \subseteq H \boxtimes K_c$;
        \item there exists a partition $\mathcal{P}$ of $V(G)$ such that $\tw(G,\mathcal{P}) < k$, and $|P| \leq c$ for every $P \in \mathcal{P}$.
    \end{enumerate}
\end{obs}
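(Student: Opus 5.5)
We prove the two implications separately. In both directions the partition classes of $\mathcal{P}$ play the role of vertices of $H$, and each part is placed into one copy of $K_c$.

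\emph{(1) $\Rightarrow$ (2).} Fix an embedding $\iota$ of $G$ into $H \boxtimes K_c$. Each $v \in V(G)$ is sent to a pair $\iota(v)=(x_v,i_v)$ with $x_v \in V(H)$ and $i_v \in [c]$. For $x \in V(H)$ set $P_x = \{v \in V(G) \mid x_v = x\}$, and let $\mathcal{P}$ be the set of nonempty $P_x$. Since $\iota$ is injective, $|P_x| \leq c$. Now take a tree decomposition $(T,(B_y \mid y\in V(T)))$ of $H$ of width less than $k$. Define $W_y = \{P_x \mid x \in B_y,\ P_x \neq \emptyset\}$, so $|W_y| \leq |B_y| \leq k$.
\begin{enumerate}
    \item Fix $u \in P_x$. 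The set $\{y \mid u \in \bigcup W_y\}$ equals $\{y \mid x \in B_y\}$, which is a subtree of $T$.
    \item Let $uv \in E(G)$. Then $x_u = x_v$ or $x_u x_v \in E(H)$ by the definition of the strong product. In either case some bag $B_y$ contains both $x_u$ and $x_v$, so $u,v \in \bigcup W_y$.
    \item Here $S = V(G)$, so $G - S$ is null and there is nothing to check.
\end{enumerate}
Hence $\tw(G,\mathcal{P}) < k$.

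\emph{(2) $\Rightarrow$ (1).} Let $\mathcal{P}$ be a partition of $V(G)$ with all parts of size at most $c$, and let $(T,(W_y))$ be a tree decomposition of $(G,\mathcal{P})$ of width less than $k$. Define $H$ with vertex set $\mathcal{P}$, where $P \neq Q$ are adjacent whenever some bag $W_y$ contains both.

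\begin{itemize}
    \item \emph{Bound on $\tw(H)$.} Use $(T,(W_y))$ itself as a tree decomposition of $H$, with bags of size at most $k$. Every edge of $H$ lies in some bag by construction. For the subtree condition, pick any $u \in P$; then $\{y \mid P \in W_y\} = \{y \mid u \in \bigcup W_y\}$. This holds because the parts are disjoint, so $u \in \bigcup W_y$ exactly when $P \in W_y$. This set is connected by condition (1), so $\tw(H) < k$.
    \item \emph{Embedding.} Inside each part $P$, injectively number its vertices by elements of $[c]$, and map $v \in P$ to $(P,\mathrm{num}(v))$. This map is injective.
    \item \emph{Edges are preserved.} Let $uv \in E(G)$ with $u \in P$ and $v \in Q$. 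If $P = Q$, the images are distinct vertices of the same copy of $K_c$, hence adjacent. Otherwise condition (2) gives a bag with $u, v \in \bigcup W_y$, so $P, Q \in W_y$ and $PQ \in E(H)$, and the images are adjacent in $H \boxtimes K_c$.
\end{itemize}
Thus $G \subseteq H \boxtimes K_c$.

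The only point that needs care is the subtree condition for $H$, which relies on $\mathcal{P}$ being a partition so that membership of a part in a bag is witnessed by any one of its vertices. Condition (3) of the decomposition of $(G,\mathcal{P})$ plays no role, since here $S = V(G)$. Apart from that, both directions are direct translations.
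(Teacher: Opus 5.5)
Your proof is correct in both directions. It is the standard argument behind the cited Observation~35, which the paper only cites and does not reprove: identify the parts of $\mathcal{P}$ with vertices of $H$, transfer the tree decomposition, and use disjointness of the parts to match the subtree and edge conditions. One minor difference from the usual formulation: in $(2)\Rightarrow(1)$ you take $H$ to be the graph on $\mathcal{P}$ in which two parts are adjacent when they ``share a bag'', rather than the quotient $G/\mathcal{P}$. The quotient is a spanning subgraph of your $H$, and the same decomposition works for both.
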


The following lemma is folklore. See for example~\cite[Lemma~8]{DHHJLMMRW24}.

\begin{lemma}[Folklore]\label{lemma:helly_with_LCA}
    Let $t$ be a positive integer and let $G$ be a graph with $\tw(G)<t$.
    For every family $\mathcal{F}$ of connected subgraphs of $G$,
    for every positive integer $d$,
    either
    \begin{enumerate}
        \item there are $d+1$ pairwise vertex-disjoint members of $\mathcal{F}$, or
        \item there is a set $Z \subseteq V(G)$ of size at most $t(2d-1)$ such that
            \begin{enumerate}[label={\normalfont(\alph*)}]
                \item $V(F) \cap Z \neq \emptyset$ for every $F \in \mathcal{F}$, and
                \item for every component $C$ of $G-Z$, $|N_G(V(C))| \leq 2t$.
            \end{enumerate}
    \end{enumerate}
\end{lemma}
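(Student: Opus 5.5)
We will prove the folklore lemma in two stages: first the classical Helly-type packing–covering argument in a tree decomposition, which gives a hitting set of size at most $td$, and then a closure step that adds lowest common ancestors so that every component of $G-Z$ has at most $2t$ neighbours.

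\emph{Stage 1: a small hitting set.} Fix a tree decomposition $\big(T,(W_x \mid x \in V(T))\big)$ of $G$ of width less than $t$, and root $T$ at an arbitrary vertex $r$. For each $F\in\mathcal{F}$, the set $T_F=\{x \mid W_x\cap V(F)\neq\emptyset\}$ induces a subtree of $T$, because $F$ is connected. Let $\mathrm{top}(F)$ denote the vertex of $T_F$ closest to $r$. We then run a greedy procedure.
\begin{enumerate}
    \item While some member of $\mathcal{F}$ is not hit by the chosen bags, pick an unhit $F$ whose $\mathrm{top}(F)$ is deepest.
    \item Add $x_F=\mathrm{top}(F)$ to a set $Y$, and put $W_{x_F}$ into $Z_0$.
\end{enumerate}
Suppose $F,F'$ are picked in this order. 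Then $F'$ avoids $W_{x_F}$. By the choice of $F$, $\mathrm{top}(F')$ is not a proper descendant of $x_F$, so $T_{F'}$ is disjoint from the subtree below $x_F$, and hence from $T_F$. Disjoint subtrees mean disjoint subgraphs, so the picked members are pairwise vertex-disjoint. If outcome (i) fails, then $|Y|\le d$, and $|Z_0|\le td$. Every $F$ meets $Z_0$ at termination.

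\emph{Stage 2: closing under LCAs.} Let $Y'$ be the closure of $Y$ under taking lowest common ancestors in $T$. The standard bound is $|Y'|\le 2|Y|-1\le 2d-1$. Set $Z=\bigcup_{x\in Y'}W_x$; then $|Z|\le t(2d-1)$, and (a) holds since $Z\supseteq Z_0$. For (b), let $C$ be a component of $G-Z$. The set $\{x \mid W_x\cap V(C)\neq\emptyset\}$ induces a connected subtree $T_C$ of $T-Y'$, so $T_C$ lies inside a single component $D$ of $T-Y'$. Because $Y'$ is LCA-closed, $D$ has at most two neighbours in $Y'$: at most one is its parent-side attachment, and at most one lies below it. Indeed, if two vertices of $Y'$ were adjacent to $D$ from below, their LCA would lie in $D$, contradicting closure.

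Now take any neighbour $v\in N_G(V(C))$, say adjacent to $u\in V(C)$. Then $v\in Z$, and some bag $W_x$ contains both $u$ and $v$, with $x\in T_C\subseteq D$. Since $v$ also lies in a bag indexed by $Y'$, the subtree of $v$ connects $x$ to $Y'$ and therefore passes through one of the at most two vertices of $Y'$ adjacent to $D$. Hence $v$ belongs to the union of those at most two bags, which gives $|N_G(V(C))|\le 2t$.

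The main obstacle is the bookkeeping in Stage 2, namely proving that each component of $T-Y'$ touches at most two vertices of $Y'$. This needs a short case analysis on the position of $D$ relative to $r$: one must check that at most one vertex of $Y'$ lies above $D$ and at most one lies below it, using LCA-closure for the latter. The size bound $|Y'|\le 2|Y|-1$ is the standard count of branching vertices in a tree whose leaf-set is $Y$.
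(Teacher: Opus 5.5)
Your proof is correct and is the standard argument behind this folklore lemma. The paper itself gives no proof and defers to \cite{DHHJLMMRW24}; your route, greedily packing unhit members by deepest top bag and then closing the chosen nodes under lowest common ancestors to get the $2t$ neighbourhood bound, is the argument the lemma's name refers to. The only nit is the case $\mathcal{F}=\emptyset$, where the chain $|Y'|\le 2|Y|-1\le 2d-1$ reads $0\le -1$; there you simply take $Z=\emptyset$.
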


Let $G, X$ be two graphs.
A \defin{topological model} (also called subdivision) of $X$ in $G$ is a pair $\big((u_x \mid x \in V(X)), (P_{xy} \mid xy \in E(X))\big)$ where
\begin{enumerate}
    \item $(u_x \mid x\in V(X))$ are pairwise distinct vertices of $G$,
    \item $(P_{xy} \mid xy \in E(X))$ are pairwise internally disjoint paths in $G$,
    \item $u_x$ and $u_y$ are the endpoints of $P_{xy}$ for every $xy \in E(X)$, and
    \item $V(P_{xy}) \cap \{u_z \mid z \in V(X)\} = \{u_x, u_y\}$ for every $xy \in E(X)$.
\end{enumerate}
If there is a topological model of $X$ in $G$, then we say that $X$ is a \defin{topological minor} of $G$.
Otherwise, we say that $G$ is \defin{$X$-topological-minor-free}.

Let $F$ be a rooted forest.
The \defin{closure} of $F$, denoted by \defin{$\clos(F)$},
is the graph with vertex set $V(F)$ and edge set $E(F) \cup \{xy \mid \text{$x,y \in V(F)$ distinct, $y$ ancestor of $x$}\}$.
Let $G$ be a graph.

The \defin{treedepth} of $G$, denoted by \defin{$\td(G)$},
is the minimum integer $h$ such that there is a rooted forest $F$ on the vertex set $V(G)$ with vertex-height at most $h$ such that
$E(G) \subseteq E(\clos(F))$.

In the proof of \Cref{thm:main}, we build inductively subdivisions of a given graph via auxiliary objects, called ``$(X,\mathcal{F})$-structures'',
which we now define (see also \Cref{fig:TFstructure}).
Let $X$ be a rooted forest,
let $G$ be a graph,
and let $\mathcal{F}$ be a family of connected subgraphs of $G$.
For a subgraph $H$ of $G$, we denote by \defin{$\mathcal{F}\vert_H$} the family
of all the members $F$ of $\mathcal{F}$ which are subgraphs of $H$.
A \defin{$(X,\mathcal{F})$-structure} in $G$ is a
pair $\big((U_x \mid x \in V(X)), (P_{xy} \mid xy \in E(\clos(X)))\big)$ where
\begin{enumerate}
    \item $(U_x \mid x \in V(X))$ are pairwise disjoint subsets of $V(G)$ inducing connected subgraphs of $G$ such that for every $x \in V(X)$, if $x$ is a leaf in $X$, then there exists $F \in \mathcal{F}$ with $V(F) \subseteq U_x$, and otherwise $U_x$ is a singleton;
    \item $(P_{xy} \mid xy \in E(\clos(X)))$ are pairwise internally disjoint paths in $G$;
    \item $P_{xy}$ is a path in $G$ between $U_x$ and $U_y$ for every $xy \in E(\clos(X))$; and
    \item $V(P_{xy})\cap \bigcup_{z \in V(X)} U_z \subseteq U_x\cup U_y$ for every $xy \in E(\clos(X))$.
\end{enumerate}

\begin{figure}[tp]
    \centering
    \includegraphics{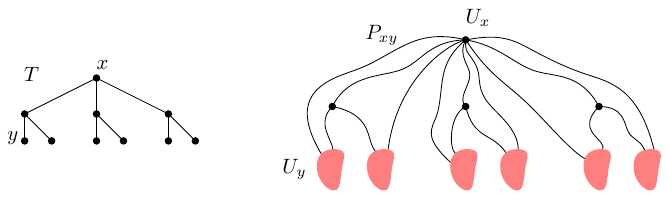}
    \caption{A $(T,\mathcal{F}$)-structure $\big((U_x \mid x \in V(T)), (P_{xy} \mid xy \in E(\clos(T)))\big)$ where $T$ is the rooted tree depicted on the left side. 
        Members of $\mathcal{F}$ are depicted in red.}
    \label{fig:TFstructure}
\end{figure}

\section{A colorful lemma}\label{sec:colorful}

The goal of this section is to prove a Ramsey-type lemma (\Cref{lemma:colorful_lemma}) which will be used in the proof of \Cref{thm:main}.
We start with the following well-known statement, whose proof is provided for completeness.

\begin{lemma}\label{lemma:boring_pigeon_hole_lemma}
    Let $k$, $d$, and $h$ be positive integers.
    There exists a positive integer $N(k,d,h)$ such that,
    for every mapping $f \colon [N(k,d,h)]^h \to [k]$,
    there exists $X_1, \dots, X_h \subseteq [N(k,d,h)]$, all of size $d$,
    such that $f$ is constant on $X_1 \times \dots \times X_h$.
\end{lemma}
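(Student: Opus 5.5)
We argue by induction on $h$, proving the statement for every $k$ and $d$ simultaneously. For $h=1$ we take $N(k,d,1)=k(d-1)+1$. Any $f\colon[N]\to[k]$ then has a color class of size at least $d$ by pigeonhole, and we let $X_1$ be any $d$ elements of it.

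For the inductive step, assume $N(k',d,h-1)$ exists for every $k'$, and view $[N]^h$ as $[N]^{h-1}\times[N]$. Set $M=N(k,d,h-1)$. For each last coordinate $j\in[N]$, consider the restriction $f_j\colon[M]^{h-1}\to[k]$ given by $f_j(y)=f(y,j)$. The map $j\mapsto f_j$ takes at most $k^{M^{h-1}}$ distinct values. Set
\[
N(k,d,h)=\max\bigl(M,\; k^{M^{h-1}}(d-1)+1\bigr).
\]
By pigeonhole there is a set $X_h\subseteq[N]$ of size $d$ on which all the $f_j$ coincide, say $f_j=g$ for every $j\in X_h$.

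Applying the induction hypothesis to $g\colon[M]^{h-1}\to[k]$ gives sets $X_1,\dots,X_{h-1}\subseteq[M]\subseteq[N]$, each of size $d$, on whose product $g$ is constant. Then $f$ is constant on $X_1\times\dots\times X_h$, since $f(y,j)=g(y)$ for every $j\in X_h$.

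The only point needing care is to fix the domain $[M]^{h-1}$ before applying pigeonhole over the last coordinate. This guarantees that the number of possible restrictions is finite and independent of $N$. The resulting bound is tower-type, which is harmless because only existence is needed.
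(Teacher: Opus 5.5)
Your proof is correct. It uses the same scheme as the paper, induction on $h$ combined with pigeonhole on one coordinate, but performs the two steps in the opposite order.

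The paper first applies the induction hypothesis to every slice $f(x,\cdot)$, obtaining a witness $(i(x),X_2(x),\dots,X_h(x))$. It then pigeonholes on these witnesses, of which there are at most $k\binom{N(k,d,h-1)}{d}^{h-1}$.

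You instead pigeonhole directly on the full slice colorings $f_j$, of which there can be $k^{M^{h-1}}$. You then invoke the induction hypothesis only once, on the common slice $g$. This avoids tracking witness sets and makes the final constancy check immediate.

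The paper's compression of each slice to its witness keeps the number of pigeonhole classes small. For fixed $d$ and $h$ its bound is polynomial in $k$, whereas yours grows like a tower of height about $h$. Since only the existence of $N(k,d,h)$ is used later in the paper, both arguments suffice.
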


\begin{proof}
    We proceed by induction on $h$.
    If $h=1$, then the result follows from the pigeonhole principle
    for $N(k,d,1) = k(d-1)+1$.
    Now suppose $h \geq 2$.
    Let
    \[\textstyle
        N(k,d,h) = k\binom{N(k,d,h-1)}{d}^{h-1}(d-1) + 1. 
    \]
    For every $x \in [N(k,d,h)]$, by the induction hypothesis,
    there exists $i(x) \in [k]$ and $X_2(x), \dots, X_h(x) \subseteq [N(k,d,h-1)]$ all of size $d$ 
    such that $f(\{x\} \times X_2(x) \times \dots \times X_h(x)) = \{i(x)\}$.
    By the pigeonhole principle,
    there exists $X_1 \subseteq [N(k,d,h)]$ of size $d$
    such that $x \mapsto (i(x), X_2(x), \dots, X_h(x))$ is constant on $X_1$.
    Then, for $(X_2, \dots, X_h) = (X_2(x), \dots, X_h(x))$ 
    for some arbitrary $x \in X_1$,
    we have that $f$ is constant on $X_1 \times \dots \times X_h$.
\end{proof}

\begin{lemma}\label{lemma:uncolor_the_leaves}
    Let $h$ and $k$ be positive integers with $h \geq 2$, 
    let $X$ be a rooted tree of vertex-height at most $h$,
    and let $L_X$ be the set of all the leaves of $X$ of height $h$.
    There exists a rooted tree $Y$ of vertex-height at most $h$ such that,
    if $L_Y$ is the set of all the leaves of $Y$ of height $h$,
    the following holds.
    For every $S_1, \dots, S_k \subseteq L_Y$ such that $\bigcup_{i \in [k]} S_i = L_Y$, 
    there exists $i \in [k]$ and an injective mapping $\phi \colon V(X) \to V(Y)$ such that
    \begin{enumerate}[label={\normalfont(\alph*)}]
        \item $\phi(x) \phi(x') \in E(Y)$ for every $xx' \in E(X)$,
        \item $\phi$ maps the root of $X$ to the root of $Y$, and
        \item $\phi(x) \in S_i$ for every $x \in L_X$.
    \end{enumerate}
\end{lemma}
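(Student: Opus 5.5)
The plan is to build $Y$ by induction on $h$ with a Ramsey-type blow-up: replace every child subtree of the root by many copies, so that pigeonhole arguments can choose copies whose deep leaves all lie in a common class $S_i$.

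Note that $\phi$ need not preserve heights. However, leaves of $X$ of height $h$ must go to leaves of $Y$ of height $h$, since $S_i \subseteq L_Y$. Because $\phi$ maps edges to edges, is injective, and sends root to root, a root-to-leaf path of length $h-1$ in $X$ is sent to a path in $Y$ starting at the root. Since $Y$ has vertex-height at most $h$, the image path goes strictly downward. I will therefore simply aim for a $\phi$ that is an embedding of rooted trees preserving the parent relation.

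First I reduce to the case where $X$ is a complete tree of vertex-height $h$ in which every non-leaf vertex has $d$ children, with $d$ large. Every rooted tree $X$ of height at most $h$ embeds, in this parent-preserving way, into such a complete tree. The embedding can be chosen so that the leaves of $X$ of height $h$ go to leaves of the complete tree, because they are at depth $h$. Composing embeddings reduces the lemma to this complete tree $X_d$.

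Now define $Y$ to be the complete tree of vertex-height $h$ with branching factor $N$, where $N$ is chosen huge. Fix a cover $S_1,\dots,S_k$ of $L_Y$. Colour each leaf of $Y$ by the least $i$ with that leaf in $S_i$; this gives a map $f\colon L_Y\to[k]$.

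The key step is to find a copy of $X_d$ inside $Y$, root to root, all of whose deepest leaves receive the same colour. I do this by induction on the height. For a vertex $v$ at height $h-1$, its $N$ leaf children contain, by pigeonhole, $d$ children of a common colour $c(v)$ provided $N\geq k(d-1)+1$. Call $c(v)$ the type of $v$. Going up a level, a vertex $w$ at height $h-2$ has $N$ children, each with a type in $[k]$, so $d$ of them share a type. Inductively, each vertex at height $j$ gets a type in $[k]$: the colour of the deep leaves of a $d$-ary complete subtree rooted there, all of which share that colour.

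At every level, pigeonhole over only $k$ types suffices, because the type already records the single colour $i$ that we need. Thus $N=k(d-1)+1$ works at every level, and \Cref{lemma:boring_pigeon_hole_lemma} is not even needed. At the root we obtain a complete $d$-ary subtree whose deep leaves all have colour $i$, and hence all lie in $S_i$. Mapping $X_d$ onto this subtree gives $\phi$ satisfying (a), (b), and (c).

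The main subtlety lies in the reduction step. There I must check that a leaf of $X$ of height $h$ maps to a leaf of $Y$ of height $h$, not to an internal vertex. This holds because the embedding preserves depth and the vertex-height of $Y$ is exactly $h$. The hypothesis $h\geq 2$ is only needed to ensure that the root is not itself a leaf of height $h$.
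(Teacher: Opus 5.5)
Your proof is correct, but after the shared reduction to the complete $d$-ary tree $T_{h,d}$ it follows a different route from the paper's. The paper takes $Y=T_{h,N}$ with $N=N(k,d,h-1)$ from \Cref{lemma:boring_pigeon_hole_lemma}. It views the leaves of $Y$ as $[N]^{h-1}$ and extracts index sets $X_1,\dots,X_{h-1}$ of size $d$ such that the colouring is constant on $X_1\times\dots\times X_{h-1}$. The resulting embedding therefore uses the same set $X_j$ of child indices at every vertex of depth $j$. This uniformity forces the pigeonhole step in \Cref{lemma:boring_pigeon_hole_lemma} to range over whole tuples $(i(x),X_2(x),\dots,X_h(x))$, which is why $N$ grows like iterated binomial coefficients.

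Your bottom-up type propagation instead lets each vertex choose its own $d$ children. That is all that (a)--(c) require. Pigeonhole over the $k$ colours then suffices at every level, and the branching $k(d-1)+1$ works. Your argument is more elementary and yields a much smaller $Y$. The paper's argument gives a stronger, product-shaped embedding, which \Cref{lemma:colorful_lemma} and the rest of the paper never use.

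Two small remarks. Your opening discussion of heights is unnecessary: you only need a parent-preserving embedding, and any path starting at the root of a rooted tree is automatically monotone downward anyway. Also, in the reduction you should fix $d$ explicitly to be at least the maximum number of children of a vertex of $X$, for instance $d=|V(X)|$ as in the paper. Then the height-$h$ leaves of $X$ land on leaves of $T_{h,d}$, and hence on leaves of $Y$ in $S_i$.
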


\begin{proof}
    For every positive integer $d$, 
    let $T_{h,d}$ be the complete $d$-ary tree of vertex-height $h$
    that is the rooted tree defined by
    \[\textstyle
        V(T_{h,d}) = \bigcup_{i=0}^{h-1} [d]^i \qquad \text{and} \qquad 
        E(T_{h,d}) = \bigcup_{i=0}^{h-2} \{(x,xa) \mid x \in [d]^i, a \in [d]\},
    \]
    with the empty tuple $()$ being the root.
    Let $d = |V(X)|$.
    It follows that $X$ is a subtree of $T_{h,d}$ where the root is mapped to the root, and $L_X$ is mapped to $\leaves{T_{h,d}}$.
    Hence, it is enough to consider the case $X=T_{h,d}$.
    Then,
    let $Y = T_{h,N(k,d,h-1)}$ where $N(k,d,h-1)$ is the positive integer given by \Cref{lemma:boring_pigeon_hole_lemma}.
    Then, for every $S_1, \dots, S_k \subseteq \leaves{Y}$ with $\bigcup_{i \in [k]} S_i = \leaves{Y}$,
    consider $f \colon \leaves{Y} \to [k]$ such that $u \in S_{f(u)}$ for every $u \in \leaves{Y}$.
    Since $\leaves{Y} = [N(k,d,h-1)]^{h-1}$, and by \Cref{lemma:boring_pigeon_hole_lemma},
    there exists $X_1, \dots, X_{h-1} \subseteq [N(k,d,h-1)]$ all of size $d$
    such that $f$ is constant on $X_1 \times \dots \times X_{h-1}$. 
    In other words, there exists $i \in [k]$ such that $X_1 \times \dots \times X_{h-1} \subseteq S_i$.
    For every $j \in [h-1]$, fix a bijection $\phi_j \colon [d] \to X_j$.
    Then, for every $j \in \{0, \dots, h-1\}$, for every $x = (x_1, \dots, x_j) \in [d]^j$,
    let
    \[
        \phi(x) = (\phi_1(x_1), \dots, \phi_j(x_j)).
    \]
    Then, $\phi \colon V(X) \to V(Y)$ is as desired.
\end{proof}

\begin{lemma}\label{lemma:colorful_lemma}
    Let $h$ and $k$ be positive integers with $h \geq 2$
    and let $X$ be a rooted tree of vertex-height at most $h$.
    There exists a rooted tree $Y$ of vertex-height at most $h$ such that
    the following holds.
    For every graph $G$,
    for all families $\mathcal{F}_1, \dots, \mathcal{F}_k$ of connected subgraphs of $G$,
    if there is a $\big(Y, \bigcup_{i \in [k]} \mathcal{F}_i\big)$-structure in $G$,
    then there exists $i \in [k]$ such that there is an $(X,\mathcal{F}_i)$-structure in $G$.
\end{lemma}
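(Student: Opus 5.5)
The plan is to reduce to \Cref{lemma:uncolor_the_leaves}. That lemma only controls leaves of height exactly $h$, so first I make all leaves of $X$ have height exactly $h$. Then I restrict a large $(Y,\bigcup_i\mathcal{F}_i)$-structure along the embedding the lemma provides.

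\emph{Step 1: padding $X$.} Let $X'$ be obtained from $X$ by attaching, below every leaf $x$ of $X$ of height $\ell<h$, a path $x=x_\ell,x_{\ell+1},\dots,x_h$. Then $X'$ is a rooted tree of vertex-height $h$ whose leaves all have height $h$, and $X \subseteq X'$ with the same root. I claim an $(X',\mathcal{F})$-structure $\big((U'_z),(P'_{zw})\big)$ yields an $(X,\mathcal{F})$-structure.
\begin{itemize}
\item For each padded leaf $x$, let $U_x$ be the union of $U'_{x_\ell},\dots,U'_{x_h}$ together with the vertex sets of the paths $P'_{x_jx_{j+1}}$. This set is connected and contains a member of $\mathcal{F}$, namely the one inside $U'_{x_h}$.
\item For every other vertex $z$, keep $U_z=U'_z$, and keep all paths $P'_{zw}$ with $zw\in E(\clos(X))\subseteq E(\clos(X'))$.
\end{itemize}
Disjointness of the new sets holds because the original $U'$ are pairwise disjoint and the chain paths are internally disjoint from every kept path. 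Condition (iv) needs a check: a kept path $P'_{zw}$ can meet the enlarged $U_x$ only in some $U'_{x_j}$ (excluded by (iv) for $X'$ unless $x_j\in\{z,w\}$) or in the interior of a chain path (excluded by internal disjointness). Hence $P'_{zw}$ is still a path between $U_z$ and $U_w$ whose interior avoids all the $U$'s. So it suffices to prove the lemma for $X'$, and I assume from now on that every leaf of $X$ has height $h$, i.e.\ $L_X=\leaves{X}$.

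\emph{Step 2: choosing $Y$ and colouring its deep leaves.} Let $Y$ be the tree given by \Cref{lemma:uncolor_the_leaves} for $X$ and $k$. From the proof I may take $Y$ to be a complete $N$-ary tree, so every vertex of height $<h$ is internal. Fix a $(Y,\bigcup_i\mathcal{F}_i)$-structure $\big((U_y),(P_{yy'})\big)$. For each $i\in[k]$, let $S_i$ be the set of $y\in L_Y$ such that $U_y$ contains a member of $\mathcal{F}_i$. Since each leaf's $U_y$ contains a member of the union, $\bigcup_i S_i=L_Y$. The lemma then gives $i$ and an injective $\phi$ that preserves edges and the root and sends $L_X$ into $S_i$.

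\emph{Step 3: restricting.} Edge preservation plus root preservation imply that $\phi$ preserves depth. Hence if $w$ is an ancestor of $z$ in $X$, then $\phi(w)$ is an ancestor of $\phi(z)$, so $\phi$ maps $E(\clos(X))$ into $E(\clos(Y))$. Define $U^X_z=U_{\phi(z)}$ and $P^X_{zw}=P_{\phi(z)\phi(w)}$, and check the four conditions.
\begin{itemize}
\item Injectivity of $\phi$ gives disjointness of the $U^X_z$ and internal disjointness of the paths.
\item A non-leaf of $X$ has height $<h$, so it maps to an internal vertex of $Y$, whose set is a singleton.
\item Leaves map into $S_i$, so their sets contain a member of $\mathcal{F}_i$.
\item Condition (iv) is inherited, because we only dropped sets.
\end{itemize}

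The main subtlety I expect is Step 1. One must make sure that the merged sets are disjoint from the kept paths except at their endpoints, and that merging does not break the ``between'' requirement. Steps 2 and 3 are routine bookkeeping once depth-preservation of $\phi$ is noted.
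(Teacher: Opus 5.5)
Your proposal is correct and follows essentially the paper's proof: pad $X$ so that every leaf has height $h$, define $S_i$ as the deep leaves of $Y$ whose set contains a member of $\mathcal{F}_i$, apply \Cref{lemma:uncolor_the_leaves}, and restrict the structure along $\phi$. The only difference is in the padding, and it is cosmetic:
\begin{itemize}
\item The paper subdivides the edge at each shallow leaf. Original leaves stay leaves, so the restriction is immediate and no merging is needed. The cost is that a single-vertex $X$ must be treated separately.
\item You attach paths below the shallow leaves and merge them, which you verify correctly. This also handles a single-vertex $X$ uniformly.
\end{itemize}
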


\begin{proof}
    If $X$ has only one vertex, then taking $Y = X$ suffices.
    We assume that $X$ has more vertices.
    Let $X'$ be obtained from $X$ by subdividing $h-h'$ times for every leaf $\ell$ the edge incident to $\ell$, where $h'$ is the height of $\ell$ in $X$.
    Note that $X'$ has vertex-height $h$,
    every leaf of $X'$ has height $h$,
    and every $(X',\mathcal{F})$-structure contains an $(X, \mathcal{F})$-structure
    for every family $\mathcal{F}$ of connected subgraphs of $G$.

    Let $Y$ be a rooted tree given by
    \Cref{lemma:uncolor_the_leaves} for $h$, $k$, and $X'$.
    Let $G$ be a graph,
    let $\mathcal{F}_1, \dots, \mathcal{F}_k$ be families of connected subgraphs of $G$,
    and let $\big((U_y \mid y \in V(Y)), (P_{yz} \mid yz \in E(\clos(Y)))\big)$
    be a $\big(Y, \bigcup_{i \in [k]} \mathcal{F}_i\big)$-structure in $G$.
    For every $i \in [k]$, let $S_i$ be the set of all the leaves $y$ of $Y$ such that $G[U_y]$ contains a member of $\mathcal{F}_i$.
    By the definition of $Y$,
    there exists $i \in [k]$ and an injective mapping $\phi \colon V(X') \to V(Y)$ such that, 
    \begin{enumerate}[label={\normalfont\ref{lemma:uncolor_the_leaves}.(\alph*)}]
        \item $\phi(x) \phi(x') \in E(Y)$ for every $xx' \in E(X')$, 
        \item $\phi$ maps the root of $X'$ to the root of $Y$, and 
        \item $\phi(x) \in S_i$ for every $x \in \leaves{X'}$.
    \end{enumerate}
    In particular, $xy \in E(\clos(X'))$ implies $\varphi(x)\varphi(y) \in E(\clos(Y))$.
    For every $x \in V(X')$, let $U'_x = U_{\phi(x)}$,
    and for every $xy \in E(\clos(X'))$, let $P'_{xy} = P_{\phi(x)\phi(y)}$.
    It follows that $\big((U'_x \mid x \in V(X')), (P'_{xy} \mid xy \in E(\clos(X')))\big)$
    is an $(X', \mathcal{F}_i)$-structure in $G$.
    Therefore, there is an $(X,\mathcal{F}_i)$-structure in $G$.
\end{proof}

\section{Proof of Theorem~\ref{thm:main}}\label{sec:main_proof}

In this section, we prove \Cref{thm:main},
which will be a consequence of the following technical statement.

\begin{thm}\label{thm:main_induction}
    Let $h$ and $t$ be positive integers with $h \geq 2$,
    and let $X$ be a rooted tree of vertex-height at most $h$.
    There exists a positive integer $c_{\ref{thm:main_induction}}(h,X,t)$ such that the following holds.
    Let $G$ be a 
    graph of treewidth less than $t$, let $R\subseteq V(G)$ with 
    $1 \leq |R| \leq (2^{h-1}-1) \cdot 2t$, 
    and let $\mathcal{F}$ be a family of connected subgraphs of $G-R$
    such that there is no $(X,\mathcal{F})$-structure in $G$.
    There exist $S \subseteq V(G)$,
    a partition $\mathcal{P}$ of $S$,
    and a tree decomposition $\big(T, (W_x \mid x \in V(T))\big)$ of $(G, \mathcal{P})$ such that
    \begin{enumerate}[label={\normalfont(\Alph*)}]
        \item $V(F) \cap S \neq \emptyset$ for every $F \in \mathcal{F}$, \label{item:thm:main_induction:hitting}
        \item $R \in \mathcal{P}$, \label{item:thm:main_induction:R_in_P}
        \item $|P| \leq c_{\ref{thm:main_induction}}(h,X,t)$ for every $P \in \mathcal{P} \setminus \{R\}$, \label{item:thm:main_induction:width_of_P}
        \item $|W_x| \leq 2^h-2$ for every $x \in V(T)$, \label{item:thm:main_induction:bags_are_small}
        \item $|W_x| \leq 2^{h-1}$ for every $x \in V(T)$ with $R \in W_x$, \label{item:thm:main_induction:bags_with_R_are_very_small}
        \item for every component $C$ of $G-S$, $|N_G(V(C))| \leq (2^{h}-2) \cdot 2t$. \label{item:thm:main_induction:interface}
    \end{enumerate}
\end{thm}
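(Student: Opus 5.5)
Induction on $h$, in the style of the rich-model arguments of~\cite{wcol_paper,hodor2026centeredcoloringsweakcoloring}, with \Cref{lemma:helly_with_LCA} driving each step.

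\textbf{Base case $h=2$.} Here $X$ is a star: a root $r$ with leaves $\ell_1,\dots,\ell_m$. An $(X,\mathcal{F})$-structure is a vertex $u$, together with $m$ pairwise disjoint members of $\mathcal{F}$ (their connected hulls $U_{\ell_j}$), joined to $u$ by internally disjoint paths. Set $d=m\cdot|V(X)|$, or a similar function of $m$ and $t$, and apply \Cref{lemma:helly_with_LCA} to $\mathcal{F}$.

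If there are $d+1$ pairwise disjoint members, the goal is a contradiction via a fan-type argument. Since $\tw(G)<t$, many disjoint connected sets that are pairwise linked cannot form large separations. So either Menger gives $m$ disjoint paths from some vertex to $m$ of the members, or a separator of size less than $t$ splits them. In the latter case we recurse on a side; this is where the bound $2t$ on neighbourhoods enters.

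Otherwise we obtain $Z$ with $|Z|\le t(2d-1)$, and we take $S=R\cup Z$ and $\mathcal{P}=\{R,Z\}$, using a tree decomposition with a single bag $\{R,Z\}$ of size $2=2^2-2$. Condition~\ref{item:thm:main_induction:bags_with_R_are_very_small} holds since $2\le 2^{1}$. For condition~\ref{item:thm:main_induction:interface}, a component of $G-S$ has at most $2t$ neighbours in $Z$, plus neighbours in $R$, and $|R|\le 2t$, giving at most $4t$. So the base case reduces to showing that $d+1$ disjoint members produce an $(X,\mathcal{F})$-structure or can be otherwise handled. I expect this needs a more careful argument: iterating over components and splitting $\mathcal{F}$.

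\textbf{Inductive step ($h\ge3$).} Write $X$ as a root $r$ with subtrees $X_1,\dots$ of height $h-1$; by passing to a larger tree, assume $X$ is a complete $d$-ary tree. For each vertex $v$ of $G$, let $\mathcal{F}_v$ be the set of connected subgraphs $H$ of $G-R$ such that $H$ contains a vertex $w$ and an $(X',\mathcal{F})$-structure rooted at $w$ inside $H$, with $X'$ a subtree of $X$ of height $h-1$, and $w$ is linked to $v$. The key new family is
\[
\mathcal{F}'=\{\text{connected } H\subseteq G-R : H \text{ contains an } (X_1,\mathcal{F})\text{-structure}\}.
\]
Excluding an $(X,\mathcal{F})$-structure should mean that $G$ contains no vertex reaching many disjoint members of $\mathcal{F}'$ by internally disjoint paths. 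This is a star-structure for $\mathcal{F}'$, so the base-case argument, applied to $\mathcal{F}'$, yields a hitting set $Z$ with $|Z|=O_{X,t}(1)$ and every component of $G-(R\cup Z)$ having a small neighbourhood. \Cref{lemma:colorful_lemma} is used when members of $\mathcal{F}'$ are hit in different ways, to recover a uniform structure of $X$.

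Then, for each component $C$ of $G-(R\cup Z)$, let $R_C=N_G(V(C))$, of size at most $2t+|R|$, which must stay at most $(2^{h-2}-1)\cdot 2t$; this forces bounding $R_C$ by the $Z$-part plus part of $R$. Apply the induction hypothesis with $h-1$ to $G[C\cup R_C]$, the set $R_C$, and the family $\mathcal{F}|_C$. There is no $(X_1,\mathcal{F})$-structure in $C$, since $Z$ hits $\mathcal{F}'$.

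Finally, glue the resulting decompositions. Take a root bag containing $R$ and $Z$, and attach each component's decomposition, with $R_C$ replaced by the parts $R$ and $Z$. The bag-size recursion is $2\cdot(2^{h-1}-2)+2=2^h-2$, which matches~\ref{item:thm:main_induction:bags_are_small} and explains the exponents.

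\textbf{Main obstacle.} The main difficulty is making $R_C$ a single part whose size fits $(2^{h-2}-1)2t$, while $R$ itself is large. I would try splitting components according to whether they touch $R$, and apply induction twice, which would account for the doubling.
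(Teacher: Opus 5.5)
Your plan has a genuine gap already at $h=2$. Applying \Cref{lemma:helly_with_LCA} to $\mathcal{F}$ itself cannot work. Take $G$ a long path $v_1\dots v_n$, $R=\{v_1\}$, $\mathcal{F}$ all one-vertex subgraphs of $G-R$, and $X=K_{1,3}$ rooted at its centre. There is no $(X,\mathcal{F})$-structure, since the root would need three distinct neighbours. Yet $\mathcal{F}$ has $n-1$ disjoint members, so no fan-type argument yields a contradiction and no bounded $Z$ hits $\mathcal{F}$. In fact $S=V(G)$ is forced, so $S=R\cup Z$ with a single bag $\{R,Z\}$ is impossible.

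The recursion you only gesture at is the real content of the proof, and it needs two ideas you do not have:
\begin{enumerate}
\item Induct on $(h,|V(G-R)|)$ lexicographically, so you can recurse at the \emph{same} height.
\item Use the exclusion only for structures rooted at a vertex of the current $R$. For $r\in R$, let $\mathcal{F}'_r$ be the connected subgraphs of $G-R$ that contain a member of $\mathcal{F}$ and a neighbour of $r$. Many disjoint members of $\mathcal{F}'_r$ for a single $r$ form a star structure rooted at $r$. So \Cref{lemma:helly_with_LCA} gives a bounded $Z$ hitting $\bigcup_r\mathcal{F}'_r$. Every component $C$ of $G-(R\cup Z)$ containing a member of $\mathcal{F}$ is then nonadjacent to $R$, and you recurse on $G[V(C)\cup N_G(V(C))]$ with new root part $N_G(V(C))\subseteq Z$.
\end{enumerate}

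The inductive step has the same kind of flaw. It is false that excluding $X$ forbids a vertex reaching many disjoint members of your $\mathcal{F}'$, because $\clos(X)$ requires internally disjoint paths from the root to \emph{every} vertex of each subtree, not one path per subtree. Let $G$ be a vertex joined to the centres of many disjoint copies of $K_{1,3}$, let $\mathcal{F}$ be the singletons, and let $X$ be a root with two children having three leaves each. Then $G$ is a tree, and any $(X,\mathcal{F})$-structure would produce a cycle from a triangle of $\clos(X)$. Yet the central vertex reaches arbitrarily many disjoint members of $\mathcal{F}'$, and $\mathcal{F}'$ has no bounded hitting set.

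Your call of the $(h-1)$-hypothesis also fails on two counts:
\begin{itemize}
\item $R_C$ may contain far more than $(2^{h-2}-1)\cdot 2t$ vertices of $R$.
\item An $(X_1,\mathcal{F})$-structure in $G[V(C)\cup R_C]$ may route through $R_C$, so $Z$ hitting $\mathcal{F}'$ does not verify the hypothesis.
\end{itemize}

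The paper's fix changes both the family and the tree:
\begin{enumerate}
\item Let $X_0$ be $X$ with the root and its children contracted to one vertex. Let $X_1$ be $X_0$ plus a pendant leaf $\ell(x)$ at each non-leaf $x$. These pendant leaves, being members of $\mathcal{F}'_r$, supply the paths from $r$ to every internal singleton. Then $\max\{d',2\}$ disjoint $(X_1,\mathcal{F}'_r)$-structures with the same $r$ close up through $r$ into an $(X,\mathcal{F})$-structure.
\item The different $r\in R$ act as at most $k$ colours. So \Cref{lemma:colorful_lemma} gives $Y_1$ of height $h-1$ such that $G-R$ has few disjoint $(Y_1,\bigcup_r\mathcal{F}'_r)$-structures, and $Z$ hits all of them.
\item The $(h-1)$-hypothesis is applied to $G-(R\cup Z)$ with a \emph{singleton} root set and the new family $\bigcup_r\mathcal{F}'_r$. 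Its output $S_0$ hits every $\mathcal{F}'_r$. Hence components of $G-(R\cup Z\cup S_0)$ that contain members of $\mathcal{F}$ avoid $R$ and have boundary of size at most $2t+(2^{h-1}-2)\cdot 2t$. These are handled by the same-$h$ recursion.
\end{enumerate}

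The bound $(2^{h-1}-1)+(2^{h-1}-2)+1=2^h-2$ comes from substituting a $T_0$-bag for the boundary part inside a bag that contains it. That is exactly where item (E) is needed, and your plan never uses it.
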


\begin{proof}
    Let $s$ be the root of $X$,
    let $d'$ be the degree of $s$ in $X$,
    let $d = \max\{d',2\}$,
    and let $X_0$ be obtained from $X$ by contracting the set $N_X(s) \cup \{s\}$
    into a single vertex $s_0$.
    Then, let $X_1$ be obtained from $X_0$ by adding for each $x \in V(X_0) \setminus \leaves{X_0}$ a new leaf $\ell(x)$ 
    adjacent to $x$.
    See \Cref{fig:claim:no_F'r_structure}.
    Note that $X_1$ has vertex-height at most $h-1$.
    Let 
    \[
        k = (2^h-1) \cdot 2t.
    \]
    If $h=2$, let $Y_1$ be the single-vertex rooted tree.
    Otherwise, by \Cref{lemma:colorful_lemma}, 
    there exists a rooted tree $Y_1$ of vertex-height at most $h-1$ such that 
    for every graph $G'$,
    for all families $\mathcal{F}_1, \dots, \mathcal{F}_k$ of connected subgraphs of $G'$,
    if there is a $(Y_1, \bigcup_{i \in [k]} \mathcal{F}_i)$-structure in $G'$,
    then there exists $i \in [k]$ such that $G'$ contains an $(X_1,\mathcal{F}_i)$-structure.
    Let 
    \[
        c_{\ref{thm:main_induction}}(h,X,t) = 
        \begin{cases}
            1 & \textrm{if $|V(X)| = 1$,}\\
            t(2k(d-1)-1) & \textrm{if $|V(X)|\geq 2$ and $h=2$,} \\
            \max\{c_{\ref{thm:main_induction}}(h-1,Y_1,t), t(2k(d-1)-1)\} & \textrm{if $|V(X)|\geq 2$ and $h>2$.}
        \end{cases}
    \]

    We show by induction on $(h, |V(G-R)|)$ lexicographically that $c_{\ref{thm:main_induction}}$ witnesses the statement. 
    We call a tuple $(h,t,X,G,R,\mathcal{F})$ as in the statement an instance.

    First suppose that $X$ has only one vertex. 
    Since $G$ has no $(X, \mathcal{F})$-structure,
    $\mathcal{F} = \emptyset$.
    Therefore, 
    the statement holds for $S=R$, $\mathcal{P} = \{R\}$, and a trivial one-bag tree decomposition of $(G,\mathcal{P})$.
    From now on, we suppose that $X$ has at least two vertices.

    Similarly, if $V(G)=R$, then set $S=R$ and $\mathcal{P} = \{R\}$ and the statement holds. 
    From now on, we suppose that $V(G) \setminus R \neq \emptyset$.

    For every $r \in R$,
    let $\mathcal{F}'_r$ be the family of all the connected subgraphs $H$ 
    of $G-R$
    such that
    \begin{enumerate}
        \item there exists $F \in \mathcal{F}$ such that $F \subseteq H$, and
        \item $N_G(r) \cap V(H) \neq \emptyset$.
    \end{enumerate}

    \begin{claim}\label{claim:no_F4_rich_Y2}
        There are no $k(d-1)+1$ pairwise disjoint $(Y_1,\bigcup_{r \in R} \mathcal{F}'_r)$-structures in $G-R$.
    \end{claim}
    
    \begin{proofclaim}
        Suppose to the contrary that there are $k(d-1)+1$ pairwise disjoint $(Y_1,\bigcup_{r \in R} \mathcal{F}'_r)$-structures in $G-R$.
        For each copy $Y'$ of $Y_1$ in a structure, we proceed as follows.
        By the definition of $Y_1$ applied to the $(Y', \bigcup_{r \in R} \mathcal{F}'_r\vert_{G_{Y'}})$-structure $\big((U'_x \mid x \in V(Y')), (P'_{xy} \mid xy \in E(\clos(Y')))\big)$ 
        in 
        \[\textstyle G_{Y'} = G\left[\bigcup_{x \in V(Y')} U'_x \cup \bigcup_{xy \in E(\clos(Y'))} V(P'_{xy})\right],\]
        there exists $r(Y') \in R$ such that $G_{Y'}$ contains an $(X_1, \mathcal{F}'_{r(Y')})$-structure $\big((U_{Y',x} \mid x \in V(X_1)), (P_{Y',xy} \mid xy \in E(\clos(X_1)))\big)$ by \Cref{lemma:colorful_lemma}.
        For every $x \in V(X_1)\setminus \leaves{X_1}$, 
        $G[U_{Y',\ell(x)}]$ contains a member of $\mathcal{F}'_{r(Y')}$, 
        and so $U_{Y',\ell(x)} \cup V(P_{Y',x \ell(x)}) \cup \{r(Y')\}$ contains a path between $r(Y')$ and $U_{Y',x}$ in $G[V(G_{Y'}) \cup \{r(Y')\}]$.
        Let $P_{Y',xs}$ be such a path.
        For every $x \in \leaves{X_0}$, 
        let $P_{Y',xs}$ be a one-edge path between $r(Y')$ and $U_{Y',x}$ in $G[V(G_{Y'}) \cup \{r(Y')\}]$, which exists since $G[U_{Y',x}]$ contains a member of $\mathcal{F}'_{r(Y')}$.
        See \Cref{fig:claim:no_F'r_structure}.

        By the pigeonhole principle, there exist $d$ distinct copies $Y'_1, \dots, Y'_d$ of $Y_1$ giving $(Y_i',\bigcup_{r \in R} \mathcal{F}'_r)$-structures in $G-R$ for each $i \in [d]$ such that
        $r(Y'_1) = \dots = r(Y'_d)$.
        Let $r$ be this element of $R$.
        By construction, the union of $d$ disjoint copies of $X_0$ contains $X-s$.
        Therefore, $G-R$ contains an $(X-s, \mathcal{F}'_r)$-structure $\big((U_x \mid x \in V(X-s)), (P_{xy} \mid xy \in E(\clos(X-s)))\big)$
        which, together with $U_s = \{r\}$ and the previously constructed paths $P_{Y'_i,xs}$ for $i \in [d]$ and $x \in V(X_0)$,
        yields an $(X,\mathcal{F}'_r)$-structure and so an $(X,\mathcal{F})$-structure in $G$ by definition of $\mathcal{F}'_r$,
        a contradiction.
        \begin{figure}[tp]
            \centering
            \includegraphics{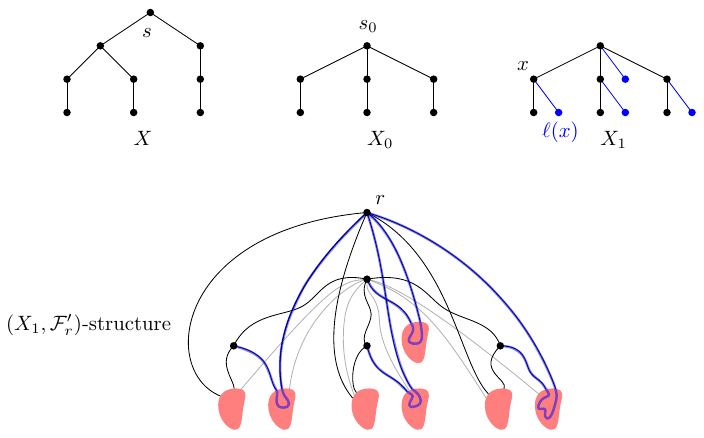}
            \caption{Illustration for the proof of \Cref{claim:no_F4_rich_Y2}.
                Using the new leaves $\ell(x)$ for $x \in V(X_0) \setminus \leaves{X_0}$, we create the blue paths. Members of $\mathcal{F}'_r$ are depicted in red.}
            \label{fig:claim:no_F'r_structure}
        \end{figure}
    \end{proofclaim}

    Let $\mathcal{F}''$ be the family of all the connected subgraphs $H$ of $G-R$ such that
    $H$ contains a $\big(Y_1,\bigcup_{r \in R} \mathcal{F}'_r \vert_H\big)$-structure.
    By \Cref{claim:no_F4_rich_Y2}, \Cref{lemma:helly_with_LCA} implies that there is a set $Z \subseteq V(G-R)$ of size at most $t(2k(d-1)-1)$ such that
    \begin{enumerate}[label={\normalfont(Z\alph*)}]
        \item $V(F) \cap Z \neq \emptyset$ for every $F \in \mathcal{F}''$, and \label{item:def_Z:hitting}
        \item $|N_{G-R}(V(C))| \leq 2t$ for every component $C$ of $G - (R \cup Z)$. \label{item:def_Z:interface}
    \end{enumerate}
    In the case that $Z$ is empty, 
    we add to $Z$ an arbitrary vertex of $V(G-R)$. (Recall that $V(G-R) \neq \emptyset$.)  
    Note that this retains both properties: \ref{item:def_Z:hitting} and \ref{item:def_Z:interface}. 
    Thus, we assume that $Z$ is nonempty.
    Also, if $V(G) = R \cup Z$, then the result is clear for
    $S = R \cup Z$ and $\mathcal{P} = \{R,Z\}$.
    Now suppose $V(G - (R \cup Z)) \neq \emptyset$.

    \begin{claim}
        There exist $S_0 \subseteq V(G-(R \cup Z))$,
        a partition $\mathcal{P}_0$ of $S_0$,
        and a tree decomposition
        $\big(T_0, (W_{0,x} \mid x \in V(T_0))\big)$ of $(G-(R \cup Z), \mathcal{P}_0)$ such that
        \begin{enumerate}[label={\normalfont(S0\alph*)}]
            \item $V(F) \cap S_{0} \neq \emptyset$ for every $F \in \bigcup_{r \in R}\mathcal{F}'_r \vert_{G-(R \cup Z)}$, \label{item:prop_of_S0:hitting}
            \item $|P| \leq c_{\ref{thm:main_induction}}(h,X,t)$ for every $P \in \mathcal{P}_{0}$, \label{item:prop_of_S0:width_of_P0}
            \item $|W_{0,x}| \leq 2^{h-1}-2$ for every $x \in V(T_{0})$, and\label{item:prop_of_S0:width_of_W0}
            \item for every component $C'$ of $G-(R \cup Z \cup S_{0})$, $|N_{G - (R \cup Z)}(V(C'))| \leq (2^{h-1}-2) \cdot 2t$. \label{item:prop_of_S0:interface}
        \end{enumerate}
    \end{claim}

    \begin{proofclaim}
        If $h=2$,
        let $S_0 = \emptyset$ and $\mathcal{P}_0 = \emptyset$,
        let $T_0$ be a one-vertex rooted tree,
        and let $W_{0,x} = \emptyset$ for the unique vertex $x$ of $T_0$.
        Since in this case, $Y_1$ has only one vertex,
        and because $Z$ intersects every member of $\mathcal{F}''$,
        there is no member of $\bigcup_{r \in R} \mathcal{F}'_r$  contained in $G-(R \cup Z)$.
        Therefore,
        \ref{item:prop_of_S0:hitting}--\ref{item:prop_of_S0:interface} are satisfied.
        Now suppose $h \geq 3$.

        Let $R_0$ be an arbitrary singleton in $V(G - (R \cup Z))$.
        By \ref{item:def_Z:hitting}, there is no $(Y_1, \bigcup_{r \in R} \mathcal{F}'_r\vert_{G-(R \cup Z \cup R_0)})$-structure in $G - (R \cup Z)$.
        Hence, we apply the induction hypothesis to the instance
        $(h-1, t, Y_1, G-(R \cup Z), R_0, \bigcup_{r \in R}\mathcal{F}'_r\vert_{G-(R \cup Z \cup R_0)})$,
        there exist $S_{0} \subseteq V(G-(R \cup Z))$ with $R_0 \subseteq S_0$,
        a partition $\mathcal{P}_{0}$ of $S_{0}$,
        and a tree decomposition $\big(T_{0}, (W_{0,x} \mid x \in V(T_{0}))\big)$ of $(G-(R \cup Z), \mathcal{P}_{0})$ such that, in particular,
        \begin{enumerate}[label={\normalfont(\Alph*$'$)}]       
            \item $V(F) \cap S_{0} \neq \emptyset$ for every $F \in \bigcup_{r \in R}\mathcal{F}'_r\vert_{G-(R \cup Z \cup R_{0})}$, and since $R_0 \subseteq S_0$, \ref{item:prop_of_S0:hitting} holds; \label{item:S0C:hitting} 
            \addtocounter{enumi}{1}
            \item $|P| \leq c_{\ref{thm:main_induction}}(h-1,Y_1,t) \leq c_{\ref{thm:main_induction}}(h,X,t)$ for every $P \in \mathcal{P}_{0} \setminus \{R_{0}\}$, and since $|R_0| = 1 \leq  c_{\ref{thm:main_induction}}(h,X,t)$, \ref{item:prop_of_S0:width_of_P0} holds; \label{item:S0C:width_of_P0C}
            \item $|W_{0,x}| \leq 2^{h-1}-2$ for every $x \in V(T_{0})$, so \ref{item:prop_of_S0:width_of_W0} holds; \label{item:prop_of_S0:width_of_W0:S0C}
            \addtocounter{enumi}{1}
            \item for every component $C'$ of $G-(R \cup Z\cup S_0)$, $|N_{G-(R \cup Z)}(V(C'))| \leq (2^{h-1}-2) \cdot 2t$, so \ref{item:prop_of_S0:interface} holds. \label{item:prop_of_S0:interface:S0C}
        \end{enumerate}
        In particular, this proves the claim.
    \end{proofclaim}
    We now fix $S_0$, $\mathcal{P}_0$, and $\big(T_0, (W_{0,x} \mid x \in V(T_0))\big)$ as in the claim.

    Let $\mathcal{C}_1$ be the family of all the components $C$ of $G-(R\cup Z \cup S_0)$ with $\mathcal{F}\vert_C \neq \emptyset$.
    Let $C \in \mathcal{C}_1$.
    Since $C$ is disjoint from $S_0$ and $\mathcal{F}\vert_C \neq \emptyset$,
    by \ref{item:prop_of_S0:hitting},
    $N_G(V(C)) \cap R = \emptyset$.
    Let $R_{1,C}$ be the set $N_G(V(C))$ if $N_G(V(C)) \neq \emptyset$,
    and let $R_{1,C}$ be an arbitrary singleton in $Z$ if $N_G(V(C)) = \emptyset$.
    Then, let $G_C = G[V(C) \cup R_{1,C}]$.
    
    Since $Z \neq \emptyset$, we have $|V(G_C-R_{1,C})| = |V(C)| < |V(G-R)|$.
    Moreover, 
    \begin{align*}
        |R_{1,C}| &\leq \max\{1,|N_{G}(V(C)) \cap Z| + |N_{G}(V(C)) \cap S_0|\} \\ 
        &\leq \max\{1,|N_{G-R}(V(D))| + |N_{G-(R\cup Z)}(V(C))|\}\\
    \intertext{ where $D$ is a component of $G-(R\cup Z)$ containing $C$}
        &\leq 2t + (2^{h-1}-2) \cdot 2t = (2^{h-1}-1) \cdot 2t \textrm{ by \ref{item:def_Z:interface} and \ref{item:prop_of_S0:interface}.}
    \end{align*}
    Hence, by the induction hypothesis applied to the instance $(h,t,X,G_C,R_{1,C},\mathcal{F}\vert_C)$,
    there exist $S_{1,C} \subseteq V(G_C)$,
    a partition $\mathcal{P}_{1,C}$ of $S_{1,C}$,
    and a tree decomposition $\big(T_{1,C}, (W_{1,x} \mid x \in V(T_{1,C}))\big)$ of $(G_C, \mathcal{P}_{1,C})$ such that
    \begin{enumerate}[label={\normalfont(\Alph*$''$)}]
        \item $V(F) \cap S_{1,C} \neq \emptyset$ for every $F \in \mathcal{F}\vert_C$, \label{item:thm:main_induction:hitting:S1C}
        \item $R_{1,C} \in \mathcal{P}_{1,C}$,
        \item $|P| \leq c_{\ref{thm:main_induction}}(h,X,t)$ for every $P \in \mathcal{P}_{1,C} \setminus \{R_{1,C}\}$, \label{item:thm:main_induction:width_of_P1C:S1C}
        \item $|W_{1,x}| \leq 2^h-2$ for every $x \in V(T_{1,C})$, \label{item:prop_of_S1C:bags_small}
        \item $|W_{1,x}| \leq 2^{h-1}$ for every $x \in V(T_{1,C})$ with $R_{1,C} \in W_{1,x}$, \label{item:prop_of_S1C:bags_very_small}
        \item for every component $C'$ of $G_C-S_{1,C}$, $|N_{G_C}(V(C'))| \leq (2^{h}-2) \cdot 2t$. \label{item:prop_of_S1C:interface}
    \end{enumerate}

    Since $\big(T_0, (W_{0,x} \mid x \in V(T_0))\big)$ is a tree decomposition of $(G - (R \cup Z), \mathcal{P}_0)$,
    there exists $z_C \in V(T_0)$ such that $N_G(V(C)) \subseteq Z \cup \bigcup W_{0,z_C}$. Fix such a vertex $z_C$. 
    In both cases of the definition of $R_{1,C}$, we have $R_{1,C}\subseteq Z \cup \bigcup W_{0,z_C}$. 
    Since $R_{1,C} \in \mathcal{P}_{1,C}$, there exists $y_C \in V(T_{1,C})$ such that $R_{1,C} \in W_{1,y_C}$.
    Fix such a vertex $y_C$.

    Let
    \begin{align*}
          S &= R \cup Z \cup S_0 \cup \bigcup_{C \in \mathcal{C}_1} (S_{1,C}-R_{1,C}), \\
        \mathcal{P} &= \{R,Z\} \cup \mathcal{P}_0 \cup \bigcup_{C \in \mathcal{C}_1} \big(\mathcal{P}_{1,C} \setminus \{R_{1,C}\}\big).
    \end{align*}

    Suppose that the trees $T_{1,C}$ for $C \in \mathcal{C}_1$ have pairwise disjoint vertex sets disjoint from $V(T_0)$.
    Now, let $T$ be the tree defined by
    \begin{align*}
        V(T) &= V(T_0) \cup \bigcup_{C \in \mathcal{C}_1} V(T_{1,C}), \\
        E(T) &= E(T_0) \cup \bigcup_{C \in \mathcal{C}_1} \big(E(T_{1,C}) \cup \{y_C z_C\}\big).
    \end{align*}
    Then, for every $x \in V(T)$, let
    \[
        W_x = 
        \begin{cases}
            W_{0,x} \cup \{R,Z\} & \textrm{if $x \in V(T_0)$,} \\
            (W_{1,x} \setminus \{R_{1,C}\}) \cup W_{0,z_C} \cup \{Z\} & \textrm{if $x \in V(T_{1,C})$ for some $C \in \mathcal{C}_1$ and $R_{1,C} \in W_{1,x}$,} \\
            W_{1,x} & \textrm{if $x \in V(T_{1,C})$ for some $C \in \mathcal{C}_1$ and $R_{1,C} \not\in W_{1,x}$.} \\
        \end{cases}
    \]
    We claim that $S$, $\mathcal{P}$, and $\big(T,(W_x \mid x \in V(T))\big)$ are as wanted.

    First, $\mathcal{P}$ is indeed a partition of $S$.
    We now show that $\big(T,(W_x \mid x \in V(T))\big)$ is a tree decomposition of $(G,\mathcal{P})$.
    Let $P \in \mathcal{P}$. We show that $\{x \in V(T) \mid P \in W_x\}$ induces a subtree of $T$.
    If $P = R$, then $\{x \in V(T) \mid P \in W_x\} = V(T_0)$, which induces $T_0$ in $T$.
    If $P \in \mathcal{P}_0$, then 
    \[
        \{x \in V(T) \mid P \in W_x\} =
            \bigcup_{x \in V(T_0), P \in W_{0,x}} \left(\{x\} \cup \bigcup_{C \in \mathcal{C}_1, z_C=x} \{y \in V(T_{1,C}) \mid R_{1,C} \in W_{1,y}\}\right).
    \]
    
    Now, $\{x \in V(T_0) \mid P \in W_{0,x}\}$ induces a subtree of $T_0$ 
    since $\big(T_0, (W_{0,x} \mid x \in V(T_0))\big)$ is a tree decomposition of $(G - (R \cup Z),\mathcal{P}_0)$.
    Second, for every $x \in \{x \in V(T_0) \mid P \in W_{0,x}\}$ and for every $C \in \mathcal{C}_1$ with $z_C=x$, 
    $\{y \in V(T_{1,C}) \mid R_{1,C} \in W_{1,y}\}$ induces a subtree of $T_{1,C}$ which contains 
    $y_C$, and $z_C y_C \in E(T)$.
    Altogether, this implies that $\{x \in V(T) \mid P \in W_x\}$ induces a subtree of $T$.
    Similarly,
    if $P=Z$, then
    \[
        \{x \in V(T) \mid P \in W_x\} =
            \bigcup_{x \in V(T_0)} \left(\{x\} \cup \bigcup_{C \in \mathcal{C}_1, z_C=x} \{y \in V(T_{1,C}) \mid R_{1,C} \in W_{1,y}\}\right),
    \]
    which induces a subtree of $T$ for the same reason.
    If $P \in \mathcal{P}_{1,C}$ for some $C \in \mathcal{C}_1$, then
    $\{x \in V(T) \mid P \in W_x\} = \{x \in V(T_{1,C}) \mid P \in W_{1,x}\}$, which is a connected subtree of $T_{1,C}$, and so of $T$,
    since $\big(T_{1,C}, (W_{1,x} \mid x \in V(T_{1,C}))\big)$ is a tree decomposition of $(G_C,\mathcal{P}_{1,C})$.
    This shows that $\{x \in V(T) \mid P \in W_x\}$ induces a subtree of $T$ for every $P \in \mathcal{P}$.

    Consider an edge $uv \in E(G[S])$.
    We want to show that $u,v \in \bigcup W_x$ for some $x \in V(T)$.
    If $u \in R$ then $v \in R \cup Z \cup S_0$ since there is no edge between $V(C)$ and $R$ for every $C \in \mathcal{C}_1$.
    Hence, either $v \in R\cup Z$ and so $u,v \in \bigcup W_x$ for any $x \in V(T_0)$,
    or $v \in S_0$. 
    In this latter case, since $\big(T_0, (W_{0,x} \mid x \in V(T_0))\big)$ is a tree decomposition of $(G-(R \cup Z), \mathcal{P}_0)$,
    there exists $x \in V(T_0)$ such that $v \in \bigcup W_{0,x}$, and so $u,v \in R \cup Z \cup \bigcup W_{0,x} = \bigcup W_x$.
    We conclude symmetrically if $v \in R$.
    Now suppose $u,v \not\in R$.
    If $u,v \in S_0 \cup Z$, then, since $\big(T_0, (W_{0,x} \mid x \in V(T_0))\big)$ is a tree decomposition of $(G-(R \cup Z), \mathcal{P}_0)$,
    there exists $x \in V(T_0)$ such that $u,v \in Z \cup \bigcup W_{0,x} \subseteq \bigcup W_x$.
    Now suppose without loss of generality that $v \not\in S_0 \cup Z$,
    and so $v \in S_{1,C}-R_{1,C}$ for some $C \in \mathcal{C}_1$.
    This implies $u \in S_{1,C}$.
    Since $\big(T_{1,C}, (W_{1,x} \mid x \in V(T_{1,C}))\big)$ is a tree decomposition of $(G_C, \mathcal{P}_{1,C})$,
    and because $uv$ is an edge in $G_C$,
    there exists $x \in V(T_{1,C})$ such that $u,v \in \bigcup W_{1,x} \subseteq \bigcup W_x$.
    This proves that for every $uv \in E(G[S])$,
    there exists $x \in V(T)$ such that $u,v \in \bigcup W_x$.

    We now show that for every component $C'$ of $G-S$,
    there exists $x \in V(T)$ such that $N_G(V(C')) \subseteq \bigcup W_x$, and $|N_G(V(C'))| \leq (2^h-2) \cdot 2t$.
    Let $C'$ be a component of $G-S$, and let $C$ be the component of $G - (R \cup Z \cup S_0)$ containing $C'$.
    First suppose that $C \not \in \mathcal{C}_1$, and so $C'=C$.
    Since $\big(T_0, (W_{0,x} \mid x \in V(T_0))\big)$ is a tree decomposition of $(G-(R \cup Z), \mathcal{P}_0)$,
    there exists $x \in V(T_0)$ such that $N_{G - (R \cup Z)}(V(C')) \subseteq \bigcup W_{0,x}$.
    Since $\bigcup W_x = \bigcup W_{0,x} \cup R \cup Z$, we deduce that $N_{G}(V(C')) \subseteq \bigcup W_{x}$.
    Moreover, $|N_{G - (R \cup Z)}(V(C'))| \leq (2^{h-1}-2) \cdot 2t$ by \ref{item:prop_of_S0:interface},
    $|N_{G-R}(V(C')) \cap Z| \leq |N_{G-R}(V(D)) \cap Z| \leq 2t$ by \ref{item:def_Z:interface} where $D$ is a component of $G-(R\cup Z)$ containing $C'$,
    and $|R| \leq (2^{h-1}-1) \cdot 2t$ by assumption.
    Altogether, this implies 
    \begin{align*}
        |N_G(V(C'))| &\leq |N_G(V(C'))\cap R| + |N_G(V(C'))\cap Z| + |N_G(V(C'))\cap S_0| \\ 
        &\leq |R| + |N_{G-R}(V(C'))\cap Z| + |N_{G-(R\cup Z)}(V(C'))| \\ 
        &\leq (2^{h-1}-1) \cdot 2t + 2t + (2^{h-1}-2) \cdot 2t = (2^h-2) \cdot 2t.
    \end{align*}
    This concludes the case $C \not\in \mathcal{C}_1$.
    Now suppose $C \in \mathcal{C}_1$.
    In particular, $N_G(V(C)) \cap R = \emptyset$, and $C'$ is a component of $G_C-S_{1,C}$.
    Hence, by \ref{item:prop_of_S1C:interface}, $N_{G_C}(V(C')) = N_G(V(C'))$ has size at most
    $(2^{h}-2) \cdot 2t$.
    Moreover, 
    there is $x \in V(T_{1,C})$ such that $N_{G_C}(V(C')) \subseteq V(G_C) \cap \bigcup W_{1,x} \subseteq \bigcup W_x$,
    and so $N_G(V(C')) \subseteq \bigcup W_x$.
    This proves that for every component $C'$ of $G-S$,
    there exists $x \in V(T)$ such that $N_G(V(C')) \subseteq \bigcup W_x$, and $|N_G(V(C'))| \leq (2^h-2) \cdot 2t$.
    The latter implies that \ref{item:thm:main_induction:interface} holds.

    This concludes the proof that $\big(T,(W_x \mid x \in V(T))\big)$ is a tree decomposition of $(G,\mathcal{P})$,
    and that \ref{item:thm:main_induction:interface} holds.
    We now show \ref{item:thm:main_induction:hitting}--\ref{item:thm:main_induction:bags_with_R_are_very_small}.
    
    Let $F \in \mathcal{F}$. We show that $V(F) \cap S \neq \emptyset$.
    If $V(F) \cap (R\cup Z \cup S_0) \neq \emptyset$ then we are done.
    Otherwise, $F \in \mathcal{F} \vert_C$ for some $C \in \mathcal{C}_1$,
    and so $V(F) \cap S_{1,C} \neq \emptyset$ by \ref{item:thm:main_induction:hitting:S1C}.
    Since $V(F) \cap R_{1,C} = \emptyset$,
    we obtain $V(F) \cap (S_{1,C} \setminus R_{1,C}) \neq \emptyset$,
    and so $V(F) \cap S \neq \emptyset$.
    This proves \ref{item:thm:main_induction:hitting}.

    Item \ref{item:thm:main_induction:R_in_P} holds by construction,
    and \ref{item:thm:main_induction:width_of_P} follows from \ref{item:prop_of_S0:width_of_P0},
    \ref{item:thm:main_induction:width_of_P1C:S1C}, and the fact that $|Z| \leq t(2k(d-1)-1) \leq c_{\ref{thm:main_induction}}(h,X,t)$.
    We now show \ref{item:thm:main_induction:bags_are_small} and \ref{item:thm:main_induction:bags_with_R_are_very_small}.
    Let $x \in V(T)$.
    If $x \in V(T_0)$, then $|W_x| = 2 + |W_{0,x}| \leq 2^{h-1}$ by \ref{item:prop_of_S0:width_of_W0}.
    Otherwise, $x \in V(T_{1,C})$ for some $C \in \mathcal{C}_1$, and $R \not\in W_x$.
    If $R_{1,C} \in W_{1,x}$, then $|W_{1,x}| \leq 2^{h-1}$ by \ref{item:prop_of_S1C:bags_very_small},
    and so by \ref{item:prop_of_S0:width_of_W0},
    we deduce $|W_x| = |W_{1,x}|-1 + |W_{0,z_C}| + 1 \leq 2^{h-1}-1 + (2^{h-1}-2) + 1 = 2^h-2$.
    Finally, if $R_{1,C} \not\in W_{1,x}$, then $|W_x| = |W_{1,x}| \leq 2^h-2$ by \ref{item:prop_of_S1C:bags_small}.
    This proves \ref{item:thm:main_induction:bags_are_small} and \ref{item:thm:main_induction:bags_with_R_are_very_small},
    and concludes the proof of the theorem.
\end{proof}

We now apply \Cref{thm:main_induction} to show \Cref{thm:main}.

\begin{proof}[Proof of \Cref{thm:main}]
    We may assume that $X$ is nonnull, since otherwise the assertion is vacuously true.
    If $\td(X)=1$, then $X$ is edgeless, and so every $X$-topological-minor-free graph has at most $|V(X)|-1$ vertices.
    In this case, the result is clear for $c(X,t)=\max\{|V(X)|-1,1\}$. 
    Thus, we may assume that $\td(X) \geq 2$ and in particular, $h \geq 2$.
    Let $h' = \td(X)$ and note that $h' \leq h$. 
    
    If $Z$ is a subgraph of $Z'$ and the number $c(Z',t)$ witnesses the theorem for $Z'$ and $t$, then setting $c(Z,t) = c(Z',t)$ gives a number witnessing the theorem for $Z$ and $t$; here we use the fact that $\td(Z) \leq \td(Z')$.
    For a large enough integer $\Delta$, each component of $X$ is isomorphic to a subgraph of the closure of the complete $\Delta$-ary tree of vertex-height $h'$.
    Let $Y_0$ be such a tree and let $X' = \clos(Y_0)$.
    Then, every component of $X$ is a subgraph of $X'$ and $\td(X') = h'$.
    Let $d$ be the number of components of $X$ and let $X^+$ be the disjoint union of $d$ copies of $X'$.
    Note that $X$ is a subgraph of $X^+$ and $\td(X^+) = h'$.
    For every $y \in \leaves{Y_0}$, add a leaf 
    with parent $y$.
    Let $Y_1$ be the resulting rooted tree.
    Note that $Y_1$ has vertex-height at most $h'+1$.
    We set
    \[
        c(X^+,t) = \max\{c_{\ref{thm:main_induction}}(h'+1,Y_1,t), t(2(d-1)-1),1\}.
    \]

    Let $G$ be a graph such that $\tw(G)<t$ and $X^+$ is not a topological minor of $G$.
    Without loss of generality, $G$ is nonnull.
    It follows that there are no $d$ pairwise disjoint topological models of $X'$.
    If $d = 1$, we set $Z$ to be an arbitrary singleton in $V(G)$.
    Otherwise, by \Cref{lemma:helly_with_LCA}, there is a set $Z$ of size at most 
    $t(2(d-1)-1)$ such that $G-Z$ has no topological model of $X'$.
    By possibly adding an arbitrary vertex to $Z$, we assume that $Z \neq \emptyset$.
    Let $\mathcal{F}$ be the family of one-vertex subgraphs of $G-Z$.
    
    We claim that there is no $(Y_1, \mathcal{F})$-structure in $G-Z$.
    Suppose for contradiction that
    there is such a $(Y_1, \mathcal{F})$-structure $\big((U_y \mid y \in V(Y_1)), (P_{yz} \mid yz \in E(\clos(Y_1)))\big)$ in $G-Z$.
    Then, for every $y \in V(Y_0)$, the set $U_y$ is a singleton $\{u_y\}$.
    It follows that $\big((u_y \mid y \in V(Y_0)), (P_{xy} \mid xy \in E(\clos(Y_0)))\big)$ is a topological model of $\clos(Y_0)=X'$, in $G-Z$.
    This contradiction proves that there is no $(Y_1, \mathcal{F})$-structure in $G-Z$.
    
    If $V(G)=Z$, then we get that $|V(G)|= |Z| \leq \max\{t(2(d-1)-1),1\}$ and the result is clear.
    Now suppose $V(G-Z) \neq \emptyset$.

    Let $R$ be an arbitrary singleton in $V(G-Z)$.
    We apply \Cref{thm:main_induction}
    to $h'+1$, $t$, $Y_1$, $G-Z$, $R$, and $\mathcal{F}\vert_{G-(R \cup Z)}$.
    There exist $S \subseteq V(G-Z)$,
    a partition $\mathcal{P}$ of $S$,
    and a tree decomposition $\big(T, (W_x \mid x \in V(T))\big)$ of $(G-Z, \mathcal{P})$ such that, in particular,
    \begin{enumerate}[label={\normalfont\ref{thm:main_induction}.(\Alph*)}]
        \item $V(F) \cap S \neq \emptyset$ for every $F \in \mathcal{F}\vert_{G-(R \cup Z)}$, \label{item:thm:main_induction:hitting:call}
        \item $R \in \mathcal{P}$, \label{item:thm:main_induction:R_in_P:call}
        \item $|P| \leq c_{\ref{thm:main_induction}}(h'+1,Y_1,t)$ for every $P \in \mathcal{P} \setminus \{R\}$, \label{item:thm:main_induction:width_of_P:call}
        \item $|W_x| \leq 2^{h'+1}-2$ for every $x \in V(T)$. \label{item:thm:main_induction:bags_are_small:call}
    \end{enumerate}
    Since $\mathcal{F}$ contains all one-vertex subgraphs of $G-Z$,
    we deduce from \ref{item:thm:main_induction:hitting:call} and \ref{item:thm:main_induction:R_in_P:call} that $S \cup Z = V(G)$.
    Then, $\big(T,(W_x \cup \{Z\} \mid x \in V(T))\big)$ is a tree decomposition of $(G,\mathcal{P} \cup \{Z\})$ 
    of width less than $2^{h'+1}-1$,
    and $|P| \leq  \max\{c_{\ref{thm:main_induction}}(h'+1,Y_1,t), t(2(d-1)-1),1\} = c(X^+,t)$ for every $P \in \mathcal{P}$.
    We conclude by \Cref{obs:partition_to_product_structure} that there is a graph $H$ of treewidth less than $2^{h'+1}-1 \leq 2^{h+1}-1$
    such that $G \subseteq H \boxtimes K_{c(X^+,t)}$.
    Finally, we set $c(X,t) = c(X^+,t)$.
    This proves the theorem.
\end{proof}

\section{Proof of Theorem~\ref{thm:main_Krs}}\label{sec:Krs}

In this section, we prove \Cref{thm:main_Krs}.
The proof is similar to a single step of the induction of 
\Cref{thm:main_induction} (namely from $h=1$ to $h=2$),
except that instead of adding a single vertex to the excluded topological minor,
we add $a$ vertices.
This is inspired by Illingworth, Scott, and Wood's paper~\cite{ISW22}.

\begin{thm}\label{thm:Krs_free_technical}
    Let $a$, $b$, and $t$ be positive integers with $b \geq 2$.
    There exists a positive integer $c_{\ref{thm:Krs_free_technical}}(a,b,t)$
    such that the following holds.
    Let $G$ be a graph of treewidth less than $t$ 
    such that $K_{a,b}$ is not a topological minor of $G$,
    let $a'$ be an integer with $0 \leq a' \leq a+2$,
    and let $R_1, \dots, R_{a'}$ be disjoint subsets of $V(G)$ 
    with $1 \leq |R_i| \leq 4t$ for every $i \in [a']$.
    Then there exist
    a partition $\mathcal{P}$ of $V(G)$
    and a tree decomposition $\big(T,(W_x \mid x \in V(T))\big)$ of $(G,\mathcal{P})$
    such that
    \begin{enumerate}[label={\normalfont(\Alph*)}]
        \item $R_i \in \mathcal{P}$ for every $i \in [a']$, \label{item:Krs:Rs_in_P}
        \item there exists $y \in V(T)$ such that $\{R_i \mid i \in [a']\} \subseteq W_y$, \label{item:Krs:Rs_in_a_Bag}
        \item $|P| \leq c_{\ref{thm:Krs_free_technical}}(a,b,t)$ for every $P \in \mathcal{P}$, \label{item:Krs:parts_are_small}
        \item $|W_x| \leq a+5$ for every $x \in V(T)$. \label{item:Krs:bags_are_small}
    \end{enumerate}
\end{thm}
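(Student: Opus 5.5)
The plan is to prove the statement by induction on $|V(G) \setminus \bigcup_{i \in [a']} R_i|$, imitating one step of the proof of \Cref{thm:main_induction}: the $a$ vertices of the small side of $K_{a,b}$ play the role of the root $s$, and the $b$ vertices of the large side play the role of the leaves. Write $R = \bigcup_{i\in[a']} R_i$. Since $a' \leq a+2$ and $|R_i| \leq 4t$, we have $|R| \leq (a+2)4t$. If $V(G)=R$, take $\mathcal{P}=\{R_1,\dots,R_{a'}\}$ with a one-bag decomposition.

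\emph{Step 1: the hitting set.} Fix a set $A \subseteq R$ with $|A| = a$. Let $\mathcal{F}_A$ be the family of connected subgraphs $H$ of $G-R$ such that $G[V(H)\cup A]$ contains a subdivision of the star $K_{1,a}$ whose leaves are exactly $A$. I use subdivided stars rather than mere adjacency to $A$ because we exclude a topological minor, not a minor.
\begin{itemize}
\item Suppose there are $b$ pairwise disjoint members of $\mathcal{F}_A$. Their centres and the disjoint fan paths give a topological model of $K_{a,b}$ with branch set $A$ on the small side, which is impossible.
\item So \Cref{lemma:helly_with_LCA} with $d=b-1$ yields a set $Z_A$ of size at most $t(2b-3)$ that hits $\mathcal{F}_A$. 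Its components have at most $2t$ neighbours in $G-R$.
\end{itemize}
Let $Z$ be the union of the sets $Z_A$ over all choices of $A$, plus one arbitrary vertex if this union is empty. Then $|Z|$ is bounded by $\binom{(a+2)4t}{a}\,t(2b-3)+1$, which depends only on $a,b,t$; this is the source of $c_{\ref{thm:Krs_free_technical}}$. To keep the interface bound I would rather apply \Cref{lemma:helly_with_LCA} once, to the union family $\bigcup_A \mathcal{F}_A$ with $d=\binom{|R|}{a}(b-1)$. This way every component $D$ of $G-(R\cup Z)$ satisfies $|N_{G-R}(V(D))| \leq 2t$.

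\emph{Step 2: recursion and gluing.} Fix a component $C$ of $G-(R\cup Z)$. By construction, $G[V(C)\cup R]$ contains no subdivided $a$-star with all leaves in $R$. The new distinguished sets for $C$ are:
\begin{itemize}
\item every $R_i$ with $N_G(V(C)) \cap R_i \neq \emptyset$, taken as $R_i \cap N_G(V(C))$, or as $R_i$ itself if needed for property (A);
\item the set $N_G(V(C))\cap Z$, which has size at most $2t \leq 4t$.
\end{itemize}
Apply induction to $G_C = G[V(C) \cup N_G(V(C))]$ with these sets. Then glue, as in \Cref{thm:main_induction}:
\begin{itemize}
\item the root bag is $\{R_1,\dots,R_{a'},Z\}$, of size at most $a+3$;
\item the tree for $C$ is attached at a node $y_C$ that contains all its distinguished sets, which exists by property (B);
\item in the child decomposition, the part $N_G(V(C))\cap Z$ is replaced by $Z$, and the $R_i$ are renamed back.
\end{itemize}
Property (B) for the whole graph holds at the root bag. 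Property (D) is inherited, because replacing a part by $Z$ does not increase bag sizes. Every part other than the $R_i$ and $Z$ comes from a recursive call, so property (C) holds.

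\emph{Main obstacle.} The recursion is legal only if at most $a+1$ of the sets $R_i$ touch $C$, so that $a' \leq a+2$ is preserved once the $Z$-set is added. A component might nevertheless touch all $a+2$ of the $R_i$. In that case the plan is as follows.
\begin{itemize}
\item Use the fan lemma (Menger) inside $G[V(C)\cup R]$. Since no vertex of $C$ has an $a$-fan into $R$, every vertex of $C$ is separated from the vertices of $R \cap N_G(V(C))$ by fewer than $a$ vertices.
\item Apply \Cref{lemma:helly_with_LCA} a second time inside $C$, to the connected subgraphs that touch at least $a+2$ of the sets $R_i$. This gives a small set $Z_C$ that breaks $C$ into pieces, each touching at most $a+1$ of the $R_i$.
\item Put $Z_C$ into a single part together with $N_G(V(C))\cap Z$, and keep that part of size at most $4t$. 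This is where the slack between $2t$ and $4t$, and between $a+3$ and $a+5$ in the bag sizes, is spent.
\end{itemize}
I expect this step, which bounds how many distinct $R_i$ a component sees while keeping every new part of size at most $4t$, to be the delicate part of the argument.
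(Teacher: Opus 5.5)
\textbf{There is a genuine gap, in exactly the step you flag as the main obstacle.}

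Your Step 1 is sound. It plays the role of the paper's bounded set $Z$: the paper destroys $(Y,\bigcup_\lambda\mathcal{F}_\lambda)$-structures for a large star $Y$ rather than $a$-fans into $R$, but in both cases $b$ disjoint objects for a common $\lambda$ yield a topological $K_{a,b}$.

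The obstacle step cannot be closed with a bounded set. The family of connected subgraphs of $C$ touching all $a+2$ sets $R_i$ need not have bounded packing number, so \Cref{lemma:helly_with_LCA} gives nothing. Here is a counterexample.
\begin{enumerate}
\item Take $a=b=4$. Let $G$ be a long path $p_1p_2\cdots p_N$ plus vertices $u_1,\dots,u_6$, with $u_ip_j\in E(G)$ if and only if $j\equiv i \pmod 6$. Let $R_i=\{u_i\}$ for $i\in[6]$.
\item Then $\tw(G)\leq 7$ and every $p_j$ has degree at most $3$. So only six vertices have degree at least $4$, and $K_{4,4}$ is not a topological minor.
\item All your families $\mathcal{F}_A$ are empty, since no $p_j$ can centre a $4$-fan. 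In any case $|Z|$ is bounded independently of $N$, so some component $C$ of $G-(R\cup Z)$ is a subpath of length unbounded in $N$.
\item Any $Z_C$ such that every component of $C-Z_C$ touches at most $a+1=5$ of the $R_i$ must meet every window of six consecutive vertices of $C$. Hence $|Z_C|\geq\lfloor |V(C)|/6\rfloor$.
\end{enumerate}
Your Menger observation holds here (each $p_j$ is separated from $R$ by its two path neighbours), but it gives no global control.

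There is also a secondary problem. Even a bounded $Z_C$ has size about $t(2d-1)$ with $d$ depending on $a,b,t$, not at most $4t$. Merging it with $N_G(V(C))\cap Z$ into one boundary set would also straddle two parts, so renaming back would enlarge bags.

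\textbf{The missing idea is to accept an unbounded but structured hitting set.} The paper proceeds as follows.
\begin{enumerate}
\item It uses plain-adjacency families $\mathcal{F}_\lambda$: connected subgraphs of $G-R$ adjacent to one chosen vertex in each of $a$ distinct $R_i$.
\item It removes a bounded $Z$ only to kill $(Y,\bigcup_\lambda\mathcal{F}_\lambda)$-structures; this is what certifies a topological $K_{a,b}$.
\item It applies \Cref{thm:main_induction} with $h=2$ to $G-(R\cup Z)$ and $\bigcup_\lambda\mathcal{F}_\lambda$. This gives a possibly huge $S$ hitting every $\mathcal{F}_\lambda$, partitioned into bounded parts, with a tree decomposition of $(G-(R\cup Z),\mathcal{P}_0)$ whose bags have at most two parts. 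Components of $G-(R\cup Z\cup S)$ have at most $4t$ neighbours in $G-(R\cup Z)$.
\item Each such component $C$ contains no member of any $\mathcal{F}_\lambda$, so it touches at most $a-1$ of the $R_i$. The three freed slots take $N_G(V(C))\cap Z$ and $P\cap N_G(V(C))$ for the at most two parts $P\in W_{0,x(C)}$. Each of these lies inside a single part, so renaming back preserves bag sizes.
\item The top bags $W_{0,x}\cup\{R_1,\dots,R_{a+2},Z\}$ have size at most $a+5$. The additive $2$ is spent on this width-one structure, not on slack for a second bounded hitting set.
\end{enumerate}
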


\begin{proof}
    Let $k = \binom{a+2}{2} \cdot (4t)^{a}$.
    Let $X$ be the rooted star with $a$ leaves.
    By the pigeonhole principle,
    there is a rooted star $Y$
    such that, for every graph $G'$,
    for all families $\mathcal{F}_1, \dots, \mathcal{F}_k$ of connected subgraphs
    of $G'$, 
    if there is a $(Y, \bigcup_{i \in [k]} \mathcal{F}_i)$-structure in $G'$, 
    then there is an $(X, \mathcal{F}_i)$-structure in $G'$
    for some $i \in [k]$.
    Let
    \[
        c_{\ref{thm:Krs_free_technical}}(a,b,t) = \max\{c_{\ref{thm:main_induction}}(2,Y,t), t(2k(b-1)-1),4t\}.
    \]
    Let $R = R_1 \cup \dots \cup R_{a'}$.
    We show by induction on $\big(|V(G)|, |V(G-R)|\big)$ lexicographically that $c_{\ref{thm:Krs_free_technical}}$ witnesses the statement. 
    We call a tuple $(a,b,t,G,(R_i \mid i\in[a']))$ as in the statement an instance.

    If $V(G) \setminus R = \emptyset$, then the result is clear: 
    take $\mathcal{P} = \{R_i \mid i \in [a']\}$ and
    a trivial one-bag tree decomposition of $(G,\mathcal{P})$.
    Now suppose that $V(G) \setminus R \neq \emptyset$.

    If $a'<a+2$, then let $R_{a'+1}$ be an arbitrary singleton
    contained in $V(G) \setminus R$, and apply the induction hypothesis
    on the instance $(a,b,t,G,(R_i \mid i \in [a'+1]))$.
    Thus, we may assume that $a' = a+2$.

    If there is $j \in [a+2]$ such that $N_G(V(G-R)) \cap R_j = \emptyset$,
    then by the induction hypothesis applied to
    the instance
    $\big(a,b,t,G-R_j, (R_i \mid i \in [a+2] \setminus \{j\})\big)$,
    there exist
    a partition $\mathcal{P}_0$ of $V(G) \setminus R_j$
    and a tree decomposition $\big(T_0,(W_x \mid x \in V(T_0))\big)$ of $(G-R_j,\mathcal{P}_0)$
    such that
    \begin{enumerate}[label={\normalfont(\Alph*$'$)}]
        \item $R_i \in \mathcal{P}_0$ for every $i \in [a+2] \setminus \{j\}$,
        \item there exists $y_0 \in V(T_0)$ such that $\{R_i \mid i \in [a+2] \setminus \{j\}\} \subseteq W_{y_0}$,
        \item $|P| \leq c_{\ref{thm:Krs_free_technical}}(a,b,t)$ for every $P \in \mathcal{P}_0$,
        \item $|W_x| \leq a+5$ for every $x \in V(T_0)$.
    \end{enumerate}
    Let $T$ be obtained from $T_0$ by adding a fresh vertex $y$ adjacent to $y_0$,
    and let $W_y = \{R_i \mid i \in [a+2]\}$.
    Then, we get the result for
    \[
        \mathcal{P} = \mathcal{P}_0 \cup \{R_j\}
    \]
    and the tree decomposition $\big(T,(W_x \mid x \in V(T))\big)$ of $(G,\mathcal{P})$.
    Thus, from now on we assume that $N_G(V(G) \setminus R)$ intersects $R_i$ for every $i \in [a+2]$.

    Let $\Lambda$ be the family of all 
    $a$-subsets $\lambda$ of $R$ such that $|\{i \in [a+2] \mid \lambda \cap R_i \neq \emptyset\}|=a$ (i.e. $\lambda$ consists of one vertex of each $R_i$, except for exactly two of these sets, from which it is disjoint).
    Note that $|\Lambda| \leq  \binom{a+2}{2} \cdot (4t)^{a} = k$.
    For every $\lambda \in \Lambda$,
    let $\mathcal{F}_{\lambda}$ be the family of all the connected subgraphs $H$ of $G-R$ such that 
    $\lambda \subseteq N_G(V(H))$.

    \begin{claim}\label{claim:no_K1r_structure}
        There are no $k(b-1)+1$ pairwise disjoint $(Y, \bigcup_{\lambda \in \Lambda} \mathcal{F}_\lambda)$-structures in $G-R$.
    \end{claim}
    \begin{proofclaim}
        Suppose to the contrary that there are $k(b-1)+1$ pairwise disjoint $(Y, \bigcup_{\lambda \in \Lambda} \mathcal{F}_\lambda)$-structures in $G-R$.
        Each of them yields a $(X, \mathcal{F}_\lambda)$-structure
        for some $\lambda \in \Lambda$.
        By the pigeonhole principle,
        there exists $\lambda \in \Lambda$ and $b$ pairwise disjoint
        $(X, \mathcal{F}_\lambda)$-structures
        $\big((U_{j,x} \mid x \in V(X)), (P_{j,xy} \mid xy \in E(\clos(X)))\big)$ for $j \in [b]$ in $G$.
        We denote by $z$ the root of $X$,
        and we fix an arbitrary naming $(x(u) \mid u \in \lambda)$ 
        of the leaves of $X$
        (this is possible since $|\lambda| = a$).
        For each $j \in [b]$,
        let $v_j$ be the unique element in $U_{j,z}$.
        Then, for every $u \in \lambda$, $U_{j,x(u)} \cup V(P_{j,z x(u)})\cup \{u\}$ contains a path $Q_{j,u}$ between $v_j$ and $u$ in $G[U_{j,x(u)} \cup V(P_{j,z x(u)}) \cup \{u\}]$.
        See \Cref{fig:claim_no_structure_K1r}.
        Now, the vertices in $\lambda \cup \{v_j \mid j \in [b]\}$
        together with the paths $Q_{j,u}$ for $j \in [b]$ and $u \in \lambda$
        form a topological model of $K_{a,b}$ in $G$, a contradiction.
    \end{proofclaim}

    \begin{figure}[tp]
        \centering
        \includegraphics{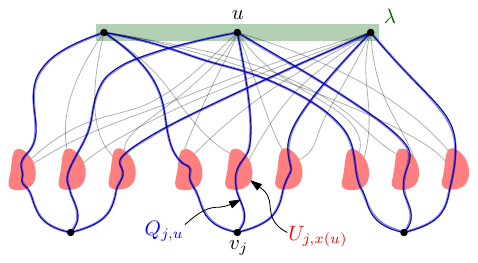}
        \caption{Illustration for the proof of \Cref{claim:no_K1r_structure}. We extract a topological model of $K_{a,b}$ from $b$ disjoint $(X, \mathcal{F}_\lambda)$-structures.}
        \label{fig:claim_no_structure_K1r}
    \end{figure}

    By \Cref{claim:no_K1r_structure} and \Cref{lemma:helly_with_LCA} applied to 
    $G-R$ and the family of all its connected subgraphs that contain a $(Y, \bigcup_{\lambda \in \Lambda} \mathcal{F}_\lambda\vert_{G-R})$-structure, there is a set $Z \subseteq V(G-R)$ with $|Z| \leq t(2k(b-1)-1)$,
    such that
    \begin{enumerate}[label={\normalfont(Z\alph*)}]
        \item there is no $(Y, \bigcup_{\lambda \in \Lambda} \mathcal{F}_\lambda\vert_{G-(R \cup Z)})$-structure in $G-(R \cup Z)$, and \label{Za}
        \item for every component $C$ of $G-(R \cup Z)$,
            $|N_{G-R}(V(C))| \leq 2t$. \label{Zb}
    \end{enumerate}
    In the case that $Z$ is empty, 
    we add to $Z$ an arbitrary vertex of $V(G-R)$. (Recall that $V(G-R) \neq \emptyset$.)  
    Note that this retains both properties: \ref{Za} and \ref{Zb}. 
    Thus, we assume that $Z$ is nonempty.

    If $G-(R \cup Z)$ has no vertices, then the result is clear for $\mathcal{P} = \{R_1, \dots, R_{a+2}, Z\}$ and a single-bag tree decomposition of $(G,\mathcal{P})$.
    Hence, we assume that $G-(R \cup Z)$ is nonnull.

    By \Cref{thm:main_induction} applied to $h=2$, $t$, $Y$, $G-(R \cup Z)$,
    a set $R_0$ being an arbitrary singleton in $V(G-(R \cup Z ))$,
    and the family $\bigcup_{\lambda \in \Lambda} \mathcal{F}_\lambda\vert_{G-(R \cup Z \cup R_0)}$,
    there exist $S \subseteq V(G-(R\cup Z))$ with $R_0 \subseteq S$,
    a partition $\mathcal{P}_0$ of $S$,
    and a tree decomposition $\big(T_0, (W_{0,x} \mid x \in V(T_0))\big)$ of $(G-(R \cup Z), \mathcal{P}_0)$ such that
    \begin{enumerate}[label={\normalfont\ref{thm:main_induction}.\normalfont(\Alph*)}]
        \item $V(F) \cap S \neq \emptyset$ for every $F \in \bigcup_{\lambda \in \Lambda} \mathcal{F}_{\lambda}\vert_{G-(R \cup Z \cup R_0)}$, \label{item:thm:main_induction:hitting:callKrs}
        \setcounter{enumi}{2}
        \item $|P| \leq c_{\ref{thm:main_induction}}(2,Y,t) \leq c_{\ref{thm:Krs_free_technical}}(a,b,t)$ for every $P \in \mathcal{P}_0$ since $|R_0|=1$, \label{item:thm:main_induction:width_of_P:callKrs}
        \item $|W_{0,x}| \leq 2^h-2=2$ for every $x \in V(T_0)$, \label{item:thm:main_induction:bags_are_small:callKrs}
        \setcounter{enumi}{5}
        \item for every component $C$ of $(G-(R \cup Z))-S$, $|N_{G-(R \cup Z)}(V(C))| \leq (2^{h}-2) \cdot 2t = 4t$. \label{item:thm:main_induction:interface:callKrs}
    \end{enumerate}

    Let $\mathcal{C}$ be the family of all the components of $G-(R \cup Z \cup S)$.
    Let $C \in \mathcal{C}$.
    Since $C$ is disjoint from $S \cup Z$,
    by \ref{item:thm:main_induction:hitting:callKrs},
    $C$ does not contain members of $\bigcup_{\lambda \in \Lambda} \mathcal{F}_\lambda$.
    Therefore, there are $i_1(C), i_2(C), i_3(C) \in [a+2]$ distinct such that $N_G(V(C)) \cap R_{i_m(C)} = \emptyset$ for each $m \in [3]$.
    Fix such integers $i_1(C)$, $i_2(C)$, and $i_3(C)$.
    Since $\big(T_0,(W_{0,x} \mid x \in V(T_0))\big)$ is a tree decomposition
    of $(G-(R \cup Z),\mathcal{P}_0)$,
    there exists $x(C) \in V(T_0)$ such that $N_{G-(R \cup Z)}(V(C)) \subseteq \bigcup W_{0,x(C)}$.
    Fix such a vertex $x(C)$.
    Now, let $(R_{C,1}, \dots, R_{C,a'(C)})$ be obtained from $(R_1, \dots, R_{a+2})$
    by removing $R_{i_m(C)}$ for each $m \in [3]$,
    and adding the (at most three) elements of $\{P \cap N_G(V(C)) \mid P \in W_{0,x(C)} \cup \{Z\}\} \setminus \{\emptyset\}$.
    By \ref{Zb}, $|N_G(V(C)) \cap Z| \leq |N_{G-R}(V(D))| \leq 2t \leq 4t$ where $D$ is a component of $G-(R\cup Z)$ containing $C$.
    We obtain that $1 \leq |R_{C,i}| \leq 4t$ for every $i \in [a'(C)]$,
    and $a'(C) \leq a+2$.
    Since $Z \cup S \neq \emptyset$, we have $|V(C)| < |V(G-R)|$.
    Therefore, by the induction hypothesis applied to \[\textstyle G_C = G\left[V(C) \cup \bigcup_{i \in [a'(C)]} R_{C,i} \right],\]
    there exist
    a partition $\mathcal{P}_C$ of $V(G_C)$,
    and a tree decomposition $\big(T_C,(W_{C,x} \mid x \in V(T_C))\big)$ of $(G_C,\mathcal{P}_C)$
    such that
    \begin{enumerate}[label={\normalfont(\Alph*$''$)}]
        \item $R_{C,i} \in \mathcal{P}_C$ for every $i \in [a'(C)]$,
        \item there exists $y(C) \in V(T_C)$ such that $\{R_{C,1}, \dots, R_{C,a'(C)}\} \subseteq W_{C,y(C)}$,
        \item $|P| \leq c_{\ref{thm:Krs_free_technical}}(a,b,t)$ for every $P \in \mathcal{P}_C$, \label{item:Krs:inductioncall:parts_are_small}
        \item $|W_{C,x}| \leq a+5$ for every $x \in V(T_C)$.
    \end{enumerate}

    We assume that the trees $T_0$ and $T_C$ for $C \in \mathcal{C}$ have pairwise disjoint vertex sets.
    For every $x \in V(T_C)$,
    let $W'_{C,x}$ be obtained from $W_{C,x}$ by replacing each boundary part $R_{C,i}\in W_{C,x}$ by the unique element of $\mathcal{P}_0 \cup \{R_1, \dots, R_{a+2}, Z\}$ that contains $R_{C,i}$ and leaving all non-boundary parts unchanged, where $P \in \mathcal{P}_0 \cup \{Z\}$ is such that $P' = P \cap N_G(V(C))$.
    Equivalently,
    \begin{align*}
        W'_{C,x} = 
        \big(W_{C,x} &\setminus \{R_{C,1}, \dots, R_{C,a'(C)}\}\big)
        \ \cup \\
        &\big\{P \in \mathcal{P}_0 \cup \{R_1, \dots, R_{a+2}, Z\} \ \big\vert\ \text{there exists } P' \in W_{C,x} \text{ with } P' \subseteq P\big\}.
    \end{align*}
    Note that $|W'_{C,x}| = |W_{C,x}| \leq a+5$.

    Now, let
    \begin{align*}
        \mathcal{P} &= \{R_1, \dots, R_{a+2}, Z\} \cup \mathcal{P}_0 \cup \bigcup_{C \in \mathcal{C}} \big(\mathcal{P}_C \setminus \{R_{C,i} \mid i \in [a'(C)]\}\big), \\
    \intertext{let $T$ be the tree defined by}
        V(T) &= V(T_0) \cup \bigcup_{C \in \mathcal{C}} V(T_C), \\
        E(T) &= E(T_0) \cup \bigcup_{C \in \mathcal{C}} \big(E(T_C) \cup \{x(C)y(C)\}\big),
    \intertext{and, for every $x \in V(T)$, let}
        W_x &=
        \begin{cases}
            W_{0,x} \cup \{R_1, \dots, R_{a+2}, Z\} & \textrm{if $x \in V(T_0)$,} \\
            W'_{C,x} & \textrm{if $x \in V(T_C)$ for some $C \in \mathcal{C}$.}
        \end{cases}
    \end{align*}

    Clearly $\mathcal{P}$ is a partition of $V(G)$. 
    It remains to show that 
    $\big(T,(W_x \mid x \in V(T))\big)$ is a tree decomposition of $(G,\mathcal{P})$,
    and that \ref{item:Krs:Rs_in_P}-\ref{item:Krs:bags_are_small} are satisfied.

    First we show that $\big(T,(W_x \mid x \in V(T))\big)$ is a tree decomposition of $(G,\mathcal{P})$.
    Let $P \in \mathcal{P}$.
    We show that $\{x \in V(T) \mid P \in W_x\}$ induces a subtree of $T$.
    If $P \in \{R_i \mid i \in [a+2]\} \cup \{Z\} \cup \mathcal{P}_0$,
    then
    \begin{align*}
        \{x \in V(&T) \mid P \in W_x\} = \\
        &\{x \in V(T_0) \mid P \in W_x\} \cup \bigcup_{C \in \mathcal{C}} \ \bigcup_{i \in [a'(C)], R_{C,i} \subseteq P} \{x \in V(T_C) \mid R_{C,i} \subseteq P, R_{C,i} \in W_{C,x}\}.
    \end{align*}
    Since for every $C \in \mathcal{C}$,
    $\big(T_C, (W_{C,x} \mid x \in V(T_C))\big)$ is a tree decomposition
    of $(G_C,\mathcal{P}_C)$, and $\big(T_0, (W_{0,x} \mid x \in V(T_0))\big)$ is a tree decomposition of $(G - (R \cup Z),\mathcal{P}_0)$,
    all these terms which are nonempty induce connected subtrees of $T$.
    Moreover, all the nonempty terms of the form $\{x \in V(T_C) \mid R_{C,i} \subseteq P, R_{C,i} \in W_{C,x}\}$ contain $y(C)$, which is a neighbor of $x(C) \in \{x \in V(T_0) \mid P \in W_x\}$ in $T$.
    We deduce that $\{x \in V(T) \mid P \in W_x\}$ induces a connected subtree of $T$.
    If $P \not \in \{R_i \mid i \in [a+2]\} \cup \{Z\} \cup \mathcal{P}_0$,
    then there exists $C \in \mathcal{C}$ such that $P \in \mathcal{P}_C \setminus \{R_{C,i} \mid i \in [a'(C)]\}$.
    Since $\big(T_C, (W_{C,x} \mid x \in V(T_C))\big)$ is a tree decomposition of $(G_C, \mathcal{P}_C)$, we deduce that $\{x \in V(T_C) \mid P \in W_{C,x}\}$ induces a subtree of $T_C$, and so $\{x \in V(T) \mid P \in W_x\} = \{x \in V(T_C) \mid P \in W_{C,x}\}$ induces a subtree of $T$.

    Consider now an edge $uv \in E(G)$.
    If $u,v \in R \cup Z$, then $u,v \in \bigcup W_x$ for any $x \in V(T_0)$.
    Suppose now that $v\not\in R \cup Z$.
    If $u \in R \cup Z$ and $v \in S$, then, since $\big(T_0, (W_{0,x} \mid x \in V(T_0))\big)$ is a tree decomposition of $(G-(R \cup Z), \mathcal{P}_0)$,
    there is $x \in V(T_0)$ such that $v \in \bigcup W_{0,x}$, 
    and so $u,v \in \bigcup W_{x}$.
    If $u,v \in S$, then, since $\big(T_0, (W_{0,x} \mid x \in V(T_0))\big)$ is a tree decomposition of $(G - (R \cup Z), \mathcal{P}_0)$,
    there is $x \in V(T_0)$ such that $u, v \in \bigcup W_{0,x}$.
    Now suppose that $v \not \in R \cup Z \cup S$.
    Then, there exists $C \in \mathcal{C}$ such that $v \in V(C)$.
    In particular, $uv$ is an edge of $G_C$.
    Since $\big(T_C, (W_{C,x} \mid x \in V(T_C))\big)$ is a tree decomposition
    of $(G_C, \mathcal{P}_C)$, there exists $x \in V(T_C)$ such that $u,v \in \bigcup W_{C,x} \subseteq \bigcup W_x$.
    Up to swapping $u$ and $v$, this shows that for every $uv \in E(G)$,
    there exists $x \in V(T)$ such that $u,v \in \bigcup W_x$.

    Since $\mathcal{P}$ is a partition of $V(G)$, the condition on components of $G-V(G)$ for tree decompositions of $(G,\mathcal{P})$ is vacuously true.

    Altogether, this proves that $\big(T,(W_x \mid x \in V(T))\big)$
    is a tree decomposition of $(G,\mathcal{P})$.
    We now show \ref{item:Krs:Rs_in_P}-\ref{item:Krs:bags_are_small}.

    By the definition of $\mathcal{P}$, \ref{item:Krs:Rs_in_P} holds.
    Moreover, for any $x \in V(T_0)$, $\{R_i \mid i \in [a+2]\} \subseteq W_x$
    and so \ref{item:Krs:Rs_in_a_Bag} holds.
    For every $P \in \mathcal{P}$,
    either $P \in \{R_i \mid i \in [a+2]\}$ and so $|P| \leq 4t \leq c_{\ref{thm:Krs_free_technical}}(a,b,t)$, or
    $P = Z$ and so $|P| \leq t(2k(b-1)-1) \leq c_{\ref{thm:Krs_free_technical}}(a,b,t)$, or
    $P \in \mathcal{P}_0$ and so $|P| \leq c_{\ref{thm:main_induction}}(2,Y,t) \leq c_{\ref{thm:Krs_free_technical}}(a,b,t)$ by \ref{item:thm:main_induction:width_of_P:callKrs}, or
    $P \in \mathcal{P}_C$ for some $C \in \mathcal{C}$, and so
    $|P| \leq c_{\ref{thm:Krs_free_technical}}(a,b,t)$ by \ref{item:Krs:inductioncall:parts_are_small}. 
    Thus,~\ref{item:Krs:parts_are_small} holds.
    In order to prove~\ref{item:Krs:bags_are_small}, consider $x\in V(T)$. 
    If $x\in V(T_0)$ then $|W_x| \leq |W_{0,x}| + (a+3) \leq a+5$ by~\ref{item:thm:main_induction:bags_are_small:callKrs}. 
    If $x\in V(T_C)$ for some $C\in\mathcal{C}$ then $|W_x| = |W'_{C,x}| = |W_{C,x}| \leq a+5$. 
    This proves \ref{item:Krs:bags_are_small}, and concludes the proof of the theorem.
\end{proof}

We apply \Cref{thm:Krs_free_technical} to show \Cref{thm:main_Krs}.

\begin{proof}[Proof of \Cref{thm:main_Krs}]
    If $b=1$, then $K_{a,b}$-topological-minor-free graphs are edgeless,
    and the result is clear for $c(a,b,t) = 1$.
    Now suppose $b \geq 2$.
    Let $c(a,b,t) = c_{\ref{thm:Krs_free_technical}}(a,b,t)$. 
    Let $G$ be a graph with $\tw(G)<t$ that excludes $K_{a,b}$ as a topological minor. 
    We apply \Cref{thm:Krs_free_technical} for $a'=0$. 
    This gives a partition $\mathcal{P}$ of $V(G)$
    and a tree decomposition $\big(T,(W_x \mid x \in V(T))\big)$ of $(G,\mathcal{P})$
    such that, in particular
    \begin{enumerate}[label={\normalfont\ref{thm:Krs_free_technical}.(\Alph*)}]
        \setcounter{enumi}{2}
        \item $|W_x| \leq a+5$ for every $x \in V(T)$. 
    \end{enumerate}
    Therefore, by \Cref{obs:partition_to_product_structure},
    there is a graph $H$ of treewidth less than $a+5$
    such that $G \subseteq H \boxtimes K_{c_{\ref{thm:Krs_free_technical}}(a,b,t)}$.
\end{proof}

\section{Open problem}\label{sec:conclu}

Let $G$ be a graph, $p$ be a positive integer, and $C$ be a set of colors.
A coloring $\varphi\colon V(G)\to C$ of $G$ is \defin{$p$-centered} if for every connected subgraph $H$ of $G$, 
either $\varphi$ uses more than $p$ colors on $H$,
or there is a color that appears exactly once on $H$.
The \defin{$p$-centered chromatic number} of $G$, denoted by \defin{$\chi_p(G)$}, introduced by Nešetřil and Ossona de Mendez~\cite{Nesetril2008}, 
is the least nonnegative integer $k$ such that $G$ admits a $p$-centered coloring using $k$ colors.
\begin{conj}\label{conj:cen_col}
    There is a function $d \colon \mathbb{N} \to \mathbb{N}$
    such that the following holds.
    Let $a$ and $b$ be positive integers with $a \leq b$.
    There exists a positive integer $c$ such that,
    for every positive integer $p$,
    for every graph $G$,
    if $K_{a,b}$ is not a topological minor of $G$, then
    \[
        \chi_p(G) \leq c \cdot p^{d(a)}.
    \]
\end{conj}
We do not know if the conjecture is true even for $a=3$.

\bibliographystyle{plain}
\bibliography{biblio}

\end{document}